\newcommand{\abs}[1]{\left\lvert#1\right\rvert}
\def\norm#1{\|#1\|}
\newtheorem{remark}{Remark}
\newtheorem{prop}{Proposition}
\begin{document}


\begin{frontmatter}

\title{A second-order semi-implicit method for the inertial Landau-Lifshitz-Gilbert equation}

\author[1]{Panchi Li}
\ead{LiPanchi1994@163.com}
\author[2]{Lei Yang}
\ead{leiyang@must.edu.mo}
\author[3]{Jin Lan}
\ead{lanjin@tju.edu.cn}
\author[1,4]{Rui Du\corref{cor1}}
\cortext[cor1]{Corresponding authors}
\ead{durui@suda.edu.cn}
\author[1,4]{Jingrun Chen\corref{cor1}}
\ead{jingrunchen@suda.edu.cn}

\address[1]{School of Mathematical Sciences, Soochow University, Suzhou, 215006, China.}
\address[2]{Faculty of Information Technology, Macau University of Science and Technology, Macao SAR, China.}
\address[3]{Center for Joint Quantum Studies and Department of Physics, School of Science, Tianjin University, 92 Weijin Road, Tianjin 300072, China.}
\address[4]{Mathematical Center for Interdisciplinary Research, Soochow University, Suzhou, 215006, China.}

\begin{abstract}
Electron spins in magnetic materials have preferred orientations collectively and generate the macroscopic magnetization. Its dynamics spans over a wide range of timescales from femtosecond to picosecond, and then to nanosecond. The Landau-Lifshitz-Gilbert (LLG) equation has widely been used in micromagnetics simulations over decades. Recent theoretical and experimental advances show that the inertia of magnetization emerges at sub-picoseconds and contributes to the ultrafast magnetization dynamics which cannot be captured intrinsically by the LLG equation. Therefore, as a generalization, the inertial LLG (iLLG) equation is proposed to model the ultrafast magnetization dynamics. Mathematically, the LLG equation is a nonlinear system of parabolic type with (possible) degeneracy. However, the iLLG equation is a nonlinear system of mixed hyperbolic-parabolic type with degeneracy, and exhibits more complicated structures. It behaves like a hyperbolic system at the sub-picosecond scale while behaves like a parabolic system at larger timescales. Such hybrid behaviors impose additional difficulties on designing numerical methods for the iLLG equation. In this work, we propose a second-order semi-implicit scheme to solve the iLLG equation. The second temporal derivative of magnetization is approximated by the standard centered difference scheme and the first derivative is approximated by the midpoint scheme involving three time steps. The nonlinear terms are treated semi-implicitly using one-sided interpolation with the second-order accuracy. At each step, the unconditionally unique solvability of the unsymmetric linear system of equations in the proposed method is proved with a detailed discussion on the condition number. Numerically, the second-order accuracy in both time and space is verified. Using the proposed method, the inertial effect of ferromagnetics is observed in micromagnetics simulations at small timescales, in consistency with the hyperbolic property of the model at sub-picoseconds. For long time simulations, the results of the iLLG model are in nice agreements with those of the LLG model, in consistency with the parabolic feature of the iLLG model at larger timescales.
\end{abstract}

\begin{keyword}

\KWD Inertial Landau-Lifshitz-Gilbert equation\sep  Semi-implicit scheme\sep Second-order accuracy\sep Micromagnetics simulations\\
\MSC[2000] 35Q99 \sep 65Z05 \sep 65M06
\end{keyword}

\end{frontmatter}


\section{Introduction}

Ferromagnetic materials are widely used for data storage devices thanks to the realization of fast magnetization dynamics under different external controls~\cite{RevModPhys.76.323,Brataas2012}. In this scenario, the dissipative magnetization dynamics is mainly controlled by the slow magnetic degrees of freedom at a timescale of picosecond ($10^{-12}\;$s) to nanosecond ($10^{-9}\;$s), which can be successfully modeled by the classical Landau-Lifshitz-Gilbert equation~\cite{LandauLifshitz1935,Gilbert1955}. Meanwhile, experimentally, ultrafast spin dynamics at the sub-picosecond timescale has been observed~\cite{PhysRevLett1996}, and the magnetization reversal excited by spin wave of sub-$\mathrm{GHz}$ frequency has been realized~\cite{NatComm2011_experiment}, both of which provide new routes for spintronic applications. Theoretically, the inertial term is added to the LLG equation, which accounts for the ultrafast magnetization dynamics and nutation loops~\cite{PhysRevB172403,PhysRevB020410,PhysRevLett057204}.

Introducing $\tau$ as the characteristic timescale of the inertial effect, the magnetization dynamics can be roughly divided into two regimes: the diffusive regime at the timescale of $t\gg\tau$, and the hyperbolic regime at the timescale of $t\approx\tau$. In the hyperbolic regime, magnetization dynamics exhibits the inertial feature~\cite{PhysRevB140413,NatPhys2020}. From the modeling perspective, $\partial_t\mathbf{M}$ and $\mathbf{M}\times \partial_t\mathbf{M}$ control the time evolution of magnetization $\mathbf{M}(\mathbf{x}, t)$ in the LLG equation, and $\partial_{tt}\mathbf{M}$ is further added to account for the inertial effect. This modification leads to the inertial LLG (iLLG) equation \cite{PhysRevB172403,PhysRevB020410}. Mathematically, the LLG equation is a nonlinear system of equations of parabolic type with (possible) degeneracy. Under the condition $t\approx\tau$, the inertial term dominates after nondimensionalization and the iLLG equation is more like a nonlinear system of equations of hyperbolic type. Under the condition $t\gg\tau$, the inertial term can be ignored and the equation is more like a parabolic system. Therefore, a reliable numerical method for the iLLG equation shall capture the inertial dynamics at the sub-picosecond scale and the reversal dynamics at the nanosecond scale.

There exists a large volume of numerical methods for the LLG equation; see \cite{Kruzik:2006,cimrak2007survey} for reviews and references therein. First order semi-implicit schemes include the Gauss-Seidel projection method~\cite{WANG2001357,LI2020109046} and the semi-implicit backward Euler method~\cite{Ivan2005IMA}. The second order semi-implicit projection method with backward differentiation formula has been recently proposed and its second-order accuracy is established theoretically \cite{XIE2020109104,CHEN202155}. Closely related to the current work, the implicit midpoint scheme is available in the literature~\cite{SIAM2006midpoint}. The key idea is to apply the midpoint scheme with three time steps to the temporal derivative $\partial_t\mathbf{M}$ and the centered difference scheme to the second derivative $\partial_{tt}\mathbf{M}$ simultaneously, and then to make nonlinear terms semi-implicit using one-sided interpolation with magnetization at the previous time steps. Implicit schemes are also available in the literature, for example the midpoint scheme~\cite{SIAM2006midpoint}. Theoretically, it is often possible to prove the convergence and energy dissipation of implicit schemes under the assumption that the nonlinear system of equations has a unique solution at each step. Numerically, for larger stepsizes, the convergence of a nonlinear solver such as Netwon's method often slows down. However, multiple solutions may arise in micromagnetics simulations for implicit schemes if the initial guess is chosen arbitrarily. Therefore, semi-implicit schemes are typically superior to implicit schemes~\cite{SUN:2021}.

Numerical methods for the iLLG equation are rarely studied. In~\cite{ruggeri2021numerical}, introducing $\mathbf{V}=\partial_t \mathbf{M}$, $\mathbf{W}=\mathbf{M}\times\partial_t\mathbf{M}$, and using the property $\mathbf{M}\cdot\mathbf{V} = 0$ for any $\mathbf{x}$ and $t$, the author designed the tangent plane scheme~(TPS) with first-order accuracy and angular momentum method~(AMM) with second-order accuracy. In these schemes, $\mathbf{M}$ and $\mathbf{V}$ in TPS, or $\mathbf{M}$ and $\mathbf{W}$ in AMM are treated as unknown fields to be approximated and thus the number of unknowns is doubled. Note that the approximation space for $\mathbf{V}$ has to satisfy $\mathbf{M}\cdot\mathbf{V} = 0$ in a pointwise sense. Therefore, at each step, a nonlinear system of equations has to be solved since the implicit midpoint scheme is applied for the temporal discretization. In this work, we propose a second-order semi-implicit scheme to solve the iLLG equation. The second temporal derivative of magnetization is approximated by the standard centered difference scheme and the first temporal derivative is approximated by the midpoint scheme. The nonlinear terms are treated semi-implicitly using one-sided interpolation with the second-order accuracy. At each temporal step, the unconditionally unique solvability of the unsymmetric linear system of equations in the proposed method is proved with a detailed discussion on the condition number. The unsymmetric linear systems of equations are solved by the GMRES solver~\cite{GMRES}.

The rest of the paper is organized as follows. In \Cref{sec:model}, the iLLG equation is introduced. In \Cref{sec:scheme}, the second-order semi-implicit scheme is proposed, and the unique solvability of the linear system of equations at each step is proved. In \Cref{sec:simulations}, numerically, the second-order accuracy in both space and time is checked, and the dynamics of iLLG equation is studied at different timescales. Conclusions are drawn in \Cref{sec:conclusion}.

\section{The inertial Landau-Lifshitz-Gilbert equation}
\label{sec:model}

The dynamics of magnetization $\mathbf{M}(\mathbf{x}, t)$ in a ferromagnetic medium follows the classical LLG equation~\cite{LandauLifshitz1935,Gilbert1955}
\begin{equation}
  \label{equ:LLG}
  \partial_t\mathbf{M} = -\gamma\mathbf{M}\times\left(\mathbf{H}_{\mathrm{eff}} - \frac{\alpha}{\gamma M_s}\partial_t\mathbf{M}\right)
\end{equation}
with $\gamma$ the gyromagnetic ratio. Below the Curie temperature, $\abs{\mathbf{M}} = M_s$ with $M_s$ the saturation magnetization, which is treated as a constant. On the right-hand side of \eqref{equ:LLG}, the first term is the gyromagnetic term and the second term is the damping term with $\alpha$ being the Gilbert damping parameter. Here, $\mathbf{H}_{\mathrm{eff}} = -\delta\mathcal{F}/\delta\mathbf{M}$ is the effective field, calculated as the variational derivative of the Landau-Lifshitz~(LL) energy $\mathcal{F}$ with respect to magnetization $\mathbf{M}$. The LL energy $\mathcal{F}$ reads as
\begin{equation}
  \label{equ:LLenergy}
  \mathcal{F}[\mathbf{M}] = \frac 1{2}\int_{\Omega}\left[\frac{A}{M_s^2}\abs{\nabla\mathbf{M}}^2 + \Phi\left(\frac{\mathbf{M}}{M_s}\right) - 2\mu_0\mathbf{H}_{\mathrm{e}}\cdot\mathbf{M}\right]\mathrm{d}\mathbf{x} + \frac{\mu_0}{2}\int_{\mathds{R}^3}\abs{\nabla U}^2\mathrm{d}\mathbf{x},
\end{equation}
where $\mu_0$ denotes the permeability of vacuum, $\Omega$ is the volume occupied by the material, $\frac{A}{M_s^2}\abs{\nabla\mathbf{M}}^2$ is the exchange interaction between the neighbors of magnetization, $\Phi\left(\frac{\mathbf{M}}{M_s}\right)$ is the anisotropy energy, $- 2\mu_0\mathbf{H}_{\mathrm{e}}\cdot\mathbf{M}$ is the Zeeman energy resulting from the external magnetic field, and the last term is the dipolar energy resulting from the stray field induced by the magnetization distribution inside the material. The dipolar energy is described by
\[
U = \int_{\Omega}\nabla N(\mathbf{x} - \mathbf{y})\cdot\mathbf{M}(\mathbf{y})\mathrm{d}\mathbf{y},
\]
where $N(\mathbf{x} - \mathbf{y}) = -\frac 1{4\pi}\frac 1{\abs{\mathbf{x} - \mathbf{y}}}$ is the Newtonian potential. Without loss of generality, we assume the material is uniaxial, that is, $\Phi(\mathbf{M}/M_s)=K_u(M_2^2+M_3^2)/M_s^2$ with $K_u$ the anisotropy constant. Then the total effective field reads as
\begin{equation}
  \label{equ:H_effective}
  \mathbf{H}_{\mathrm{eff}} =  \frac{A}{M_s^2}\Delta\mathbf{M} - \frac{K_u}{M_s^2}(M_2\mathbf{e}_2 + M_3\mathbf{e}_2) + \mu_0\mathbf{H}_{\mathrm{e}} + \mu_0\mathbf{H}_{\mathrm{s}}
\end{equation}
with $\mathbf{e}_2 = (0, 1, 0)^T$, $\mathbf{e}_3 = (0, 0, 1)^T$ and $\mathbf{H}_\mathrm{s} = -\nabla U$. An equivalent expression for the anisotropy energy of a uniaxial ferromagnet is $\Phi(\mathbf{M}/M_s) = -K_uM_1^2/M_s^2$, and the corresponding field is $K_uM_1/M_s^2\mathbf{e}_1$ with $\mathbf{e}_1 = (1, 0, 0)^T$.

Recent advances in experiments reveal the ultrafast inertial dynamics~\cite{PhysRevB140413,NatPhys2020} and these observations are successfully interpreted by the inertial effect at the sub-picosecond timescale. To account for the inertial effect, from the modeling perspective, the iLLG equation is proposed as~\cite{PhysRevB172403,PhysRevB020410,PhysRevLett057204}
\begin{equation}
  \label{equ:iLLG}
  \partial_t\mathbf{M} = -\gamma\mathbf{M}\times\left(\mathbf{H}_{\mathrm{eff}} - \frac{\alpha}{\gamma M_s}\left(\partial_t\mathbf{M} + \tau\partial_{tt}\mathbf{M}\right)\right),
\end{equation}
where $\tau$ represents the characteristic timescale of the inertial dynamics of magnetization. Compared with the ferromagnetic counterparts, the dynamics of magnetization in ferrimagnets and antiferromagnets with two sublattics $\mathbf{M}_1$ and
$\mathbf{M}_2$ can be recast to the dynamics of stagger magnetization $\mathbf{n} = (\mathbf{M}_1 - \mathbf{M}_2)/(M_{s1} + M_{s2})$
and the net magnetization $\mathbf{m} = \mathbf{M}_1/M_{s1}+\mathbf{M}_2/M_{s2}$, and the dynamics of $\mathbf{n}$ follows a similar form to the iLLG equation~\cite{PhysRevLett.122.057204,PhysRevB.103.214407}.

\begin{prop}\label{prop:energy}
Define the total energy $\mathcal{J}[\mathbf{M}] = \mathcal{F}[\mathbf{M}] + \frac{\alpha\tau}{2\gamma M_s}\int_{\Omega}\abs{\partial_t\mathbf{M}}^2\mathrm{d}\mathbf{x}$. Then the following energy law holds
\begin{equation}\label{eqn:ene dissip1}
\frac{\mathrm{d}\mathcal{J}[\mathbf{M}]}{\mathrm{d}t} = -\frac{\alpha}{\gamma M_s}\int_{\Omega}\abs{\partial_t\mathbf{M}}^2\mathrm{d}\mathbf{x} + \mu_0\int_{\Omega}\mathbf{M}\cdot\mathbf{H}'_{\mathrm{e}}(t)\mathrm{d}\mathbf{x}.
\end{equation}
For the zero or constant external magnetic field, the following energy dissipation relation holds
\begin{equation}\label{eqn:ene dissip2}
\frac{d\mathcal{J}[\mathbf{M}]}{dt} = -\frac{\alpha}{\gamma M_s}\int_{\Omega}\abs{\partial_t\mathbf{M}}^2\mathrm{d}\mathbf{x}\leq 0.
\end{equation}
\end{prop}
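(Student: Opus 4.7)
The plan is to differentiate $\mathcal{J}$ in time and reduce everything to quantities controlled by the iLLG equation. Applying the chain rule for variational derivatives gives $\frac{d\mathcal{F}}{dt}=\int_\Omega\frac{\delta\mathcal{F}}{\delta\mathbf{M}}\cdot\partial_t\mathbf{M}\,d\mathbf{x}+\partial_t\mathcal{F}|_{\mathbf{M}\text{ fixed}}$; since $\mathbf{H}_{\mathrm{eff}}=-\delta\mathcal{F}/\delta\mathbf{M}$ and the only explicit $t$-dependence sits in the Zeeman term, this yields $-\int_\Omega\mathbf{H}_{\mathrm{eff}}\cdot\partial_t\mathbf{M}\,d\mathbf{x}$ plus the $\mathbf{H}'_e$ contribution predicted in \eqref{eqn:ene dissip1}. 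The kinetic-like correction in $\mathcal{J}$ differentiates directly to $\frac{\alpha\tau}{\gamma M_s}\int_\Omega\partial_t\mathbf{M}\cdot\partial_{tt}\mathbf{M}\,d\mathbf{x}$. The remaining task is therefore to evaluate $\int_\Omega\mathbf{H}_{\mathrm{eff}}\cdot\partial_t\mathbf{M}\,d\mathbf{x}$ using the dynamics.

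To achieve this, I would rewrite \eqref{equ:iLLG} as $\partial_t\mathbf{M}-\frac{\alpha}{M_s}\mathbf{M}\times\partial_t\mathbf{M}-\frac{\alpha\tau}{M_s}\mathbf{M}\times\partial_{tt}\mathbf{M}=-\gamma\mathbf{M}\times\mathbf{H}_{\mathrm{eff}}$ and test it pointwise against three vectors in turn. Testing against $\mathbf{H}_{\mathrm{eff}}$ annihilates the right-hand side and, via the scalar triple product identity $\mathbf{H}_{\mathrm{eff}}\cdot(\mathbf{M}\times\mathbf{V})=-\mathbf{V}\cdot(\mathbf{M}\times\mathbf{H}_{\mathrm{eff}})$, expresses $\mathbf{H}_{\mathrm{eff}}\cdot\partial_t\mathbf{M}$ in terms of the two unknowns $\partial_t\mathbf{M}\cdot(\mathbf{M}\times\mathbf{H}_{\mathrm{eff}})$ and $\partial_{tt}\mathbf{M}\cdot(\mathbf{M}\times\mathbf{H}_{\mathrm{eff}})$. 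Testing next against $\partial_t\mathbf{M}$ and then against $\partial_{tt}\mathbf{M}$ solves for each of these unknowns, using that $\mathbf{V}\cdot(\mathbf{M}\times\mathbf{V})=0$ for any $\mathbf{V}$. Substituting the two expressions back, the auxiliary triple product $X:=\partial_t\mathbf{M}\cdot(\mathbf{M}\times\partial_{tt}\mathbf{M})$ appears twice with opposite signs and cancels, leaving the pointwise identity $\mathbf{H}_{\mathrm{eff}}\cdot\partial_t\mathbf{M}=\frac{\alpha}{\gamma M_s}|\partial_t\mathbf{M}|^2+\frac{\alpha\tau}{\gamma M_s}\partial_t\mathbf{M}\cdot\partial_{tt}\mathbf{M}$.

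Integrating this identity over $\Omega$ and substituting into $d\mathcal{J}/dt$ makes the $\partial_t\mathbf{M}\cdot\partial_{tt}\mathbf{M}$ integrals coming from $-\int\mathbf{H}_{\mathrm{eff}}\cdot\partial_t\mathbf{M}\,d\mathbf{x}$ cancel exactly against those arising from the kinetic-like correction in $\mathcal{J}$, producing \eqref{eqn:ene dissip1}; relation \eqref{eqn:ene dissip2} then follows immediately by setting $\mathbf{H}'_e\equiv 0$ and noting that $\alpha,\tau,\gamma,M_s>0$. The main obstacle is sign bookkeeping in the three cyclic identities: the delicate step is to verify that the two occurrences of $X$ after substitution carry opposite signs, which relies on the correct reading $\partial_{tt}\mathbf{M}\cdot(\mathbf{M}\times\partial_t\mathbf{M})=-X$. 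A minor check worth isolating is that the boundary terms generated when pairing the exchange contribution $\frac{A}{M_s^2}\Delta\mathbf{M}$ with $\partial_t\mathbf{M}$ vanish under the natural Neumann condition on $\partial\Omega$ that is implicit in the identification $\mathbf{H}_{\mathrm{eff}}=-\delta\mathcal{F}/\delta\mathbf{M}$.
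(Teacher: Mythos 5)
Your argument is correct and follows the paper's proof in essence: both reduce to the pointwise identity $\partial_t\mathbf{M}\cdot\bigl(\mathbf{H}_{\mathrm{eff}}-\tfrac{\alpha}{\gamma M_s}(\partial_t\mathbf{M}+\tau\partial_{tt}\mathbf{M})\bigr)=0$ and then cancel the $\partial_t\mathbf{M}\cdot\partial_{tt}\mathbf{M}$ integrals against the derivative of the kinetic part of $\mathcal{J}$. The paper obtains that identity in a single step by taking the inner product of \eqref{equ:iLLG} with the entire bracketed field (the right-hand side is a cross product with that field, hence orthogonal to it), which replaces your three separate tests and the cancellation of the auxiliary triple product $X$.
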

\begin{proof}
The inner product between \eqref{equ:iLLG} and $\mathbf{H}_{\mathrm{eff}} - \frac{\alpha}{\gamma M_s}\left(\partial_t\mathbf{M} + \tau\partial_{tt}\mathbf{M}\right)$ yields
\begin{equation}\label{eqn:identity}
  \int_{\Omega}\partial_t\mathbf{M}\cdot\left(\mathbf{H}_{\mathrm{eff}} - \frac{\alpha}{\gamma M_s}\left(\partial_t\mathbf{M} + \tau\partial_{tt}\mathbf{M}\right)\right)\mathrm{d}\mathbf{x} = 0.
\end{equation}
Therefore,
\begin{align*}
  \frac{d\mathcal{J}[\mathbf{M}]}{dt} &= \int_{\Omega} \left\{\left(\partial_t\mathbf{M} \cdot(-\mathbf{H}_{\mathrm{eff}}) + \mu_0\mathbf{M}\cdot\mathbf{H}'_{\mathrm{e}}(t) \right)  + \frac{\alpha\tau}{\gamma M_s}\partial_t\mathbf{M}\cdot\partial_{tt}\mathbf{M}\right\}\mathrm{d}\mathbf{x}\\
  &\overset{\eqref{eqn:identity}}{=} \int_{\Omega} \left\{-\frac{\alpha}{\gamma M_s}\abs{\partial_t\mathbf{M}}^2 + \mu_0\mathbf{M}\cdot\mathbf{H}'_{\mathrm{e}}(t) \right\}\mathrm{d}\mathbf{x},
\end{align*}
which completes the derivation of \eqref{eqn:ene dissip1}. If $\mathbf{H}'_{\mathrm{e}}(t)$ vanishes, \eqref{eqn:ene dissip1} reduces to \eqref{eqn:ene dissip2}.
\end{proof}

\begin{remark}
The definition of total energy is well-defined once the following initial condition is introduced
  \begin{equation}
    \partial_t\mathbf{M}(\mathbf{x},0) = 0,
  \end{equation}
which provides the other initial condition for the second-order iLLG equation. Note that $\mathcal{J}[\mathbf{M}] = \mathcal{F}[\mathbf{M}]$ at $t = 0$.
  \label{rk:initial-condition}
\end{remark}

To ease the description, we rewrite the iLLG equation \eqref{equ:iLLG} in a dimensionless by defining $\mathbf{H}_{\mathrm{eff}} = \mu_0M_s\mathbf{h}$, $\mathbf{H}_{\mathrm{e}} =M_s\mathbf{h}_{\mathrm{e}}$, $\mathbf{H}_{\mathrm{s}} = M_s\mathbf{h}_{\mathrm{s}}$, and $\mathbf{M} = M_s\mathbf{m}$. After the spatial rescaling $x\rightarrow Lx$ (still use $x$ after rescaling) with $L$ being the diameter of $\Omega$, the dimensionless form of LL energy functional is
\begin{equation}
\label{equ:LLenergydimensionless}
\tilde{\mathcal{F}}[\mathbf{m}] = \frac{1}{2}\int_{\Omega'}\left[\epsilon\abs{\nabla\mathbf{m}}^2 + q(m_2^2 + m_3^2) - 2\mathbf{h}_{\mathrm{e}}\cdot\mathbf{m}\right]\mathrm{d}\mathbf{x} - \int_{\Omega'}\mathbf{h}_{\mathrm{s}}\cdot\mathbf{m}\mathrm{d}\mathbf{x},
\end{equation}
where $\epsilon=A/(\mu_0M_s^2L^2)$, $q=K_u/(\mu_0M_s^2)$ and $\tilde{\mathcal{F}}[\mathbf{m}]=\mathcal{F}[\mathbf{M}]/(\mu_0M_s^2)$. Meanwhile, after the temporal rescaling $t\rightarrow t(M_s\mu_0\gamma)^{-1}$ (still use $t$ after rescaling), the dimensionless form of \eqref{equ:iLLG} reads as
\begin{equation}
\label{equ:dimensionlessILLG}
\partial_t\mathbf{m} = -\mathbf{m}\times\mathbf{h} + \alpha\mathbf{m}\times\left( \partial_t\mathbf{m} + \eta\partial_{tt}\mathbf{m} \right)
\end{equation}
with
\begin{equation}
  \label{equ:dimensionlesseffectivefield}
  \mathbf{h} = \epsilon\Delta\mathbf{m} - q(m_2\mathbf{e}_2 + m_3\mathbf{e}_3) + \mathbf{h}_e + \mathbf{h}_s.
\end{equation}
The dimensionless parameter $\eta = \tau/(\mu_0\gamma M_s)^{-1}$, represents the ratio between the characteristic timescale of inertial dynamics and the characteristic timescale of magnetization dynamics. Homogeneous Neumann boundary condition is used
\begin{equation}
  \label{equ:Neumann}
  \frac{\partial\mathbf{m}}{\partial\boldsymbol{\nu}}\Big\vert_{\partial\Omega} = 0,
\end{equation}
where $\boldsymbol{\nu}$ represents the outward unit normal vector along $\partial\Omega$.

\section{A second-order semi-implicit finite difference scheme}
\label{sec:scheme}

Denote
\begin{equation}
\label{equ:ideal_effective_field}
\mathbf{f}(\mathbf{m}^n) = - q(m^n_2\mathbf{e}_2 + m^n_3\mathbf{e}_3) + \mathbf{h}_e + \mathbf{h}^n_s.
\end{equation}
For \cref{equ:dimensionlessILLG}-\eqref{equ:Neumann}, we employ a midpoint scheme with three time steps
\begin{multline}
  \label{equ:implicit_midpoints}
  \frac{\mathbf{m}^{n+1} - \mathbf{m}^{n-1}}{2\Delta t} = -\frac{\mathbf{m}^{n+1} + \mathbf{m}^{n-1}}{2}\times\left(\epsilon\Delta_h\frac{\mathbf{m}^{n+1} + \mathbf{m}^{n-1}}{2} + \mathbf{f}(\frac{\mathbf{m}^{n+1} + \mathbf{m}^{n-1}}{2})\right) \\
+\alpha\frac{\mathbf{m}^{n+1} + \mathbf{m}^{n-1}}{2}\times\left(\frac{\mathbf{m}^{n+1} - \mathbf{m}^{n-1}}{2\Delta t} + \eta\frac{\mathbf{m}^{n+1} -2\mathbf{m}^{n} + \mathbf{m}^{n-1}}{\Delta t^2}\right),
\end{multline}
where $\Delta_h$ represents the standard second-order centered difference stencil. For a 3D Cartesian mesh with indices $j = 0,1,\cdots, nx,nx+1$, $k = 0,1,\cdots, ny,ny+1$ and $l = 0,1,\cdots, nz, nz+1$. The second-order centered difference for $\Delta_h \mathbf{m}_{j,k,l}$ reads as
\begin{equation}
\begin{aligned}
\Delta_h\mathbf{m}_{j,k,l} = &\frac{\mathbf{m}_{j+1,k,l}-
	2\mathbf{m}_{j,k,l}+\mathbf{m}_{j-1,k,l}}{\Delta x^2} \\
&+\frac{\mathbf{m}_{j,k+1,l}-2\mathbf{m}_{j,k,l}+
	\mathbf{m}_{j,k-1,l}}{\Delta y^2} \\
&+\frac{\mathbf{m}_{j,k,l+1}-2\mathbf{m}_{j,k,l}+
	\mathbf{m}_{j,k,l-1}}{\Delta z^2},
\end{aligned}
\label{LaplacianDiscrete}
\end{equation}
where $\mathbf{m}_{j,k,l}=\mathbf{m}((j - \frac 1{2})\Delta x,
(k - \frac 1{2})\Delta y, (l - \frac 1{2})\Delta z)$.
For the Neumann boundary condition, a second-order approximation yields
\begin{align*}
\mathbf{m}_{0,k,l} & =\mathbf{m}_{1,k,l},\quad \mathbf{m}_{nx,k,l}  =\mathbf{m}_{nx+1,k,l},\quad k = 1,\cdots,ny,l=1,\cdots,nz, \\
\mathbf{m}_{j,0,l} & =\mathbf{m}_{j,1,l},\quad \mathbf{m}_{j,ny,l}  =\mathbf{m}_{j,ny+1,l},\quad j = 1,\cdots,nx,l=1,\cdots,nz, \\
\mathbf{m}_{j,k,0} & =\mathbf{m}_{j,k,1},\quad \mathbf{m}_{j,k,nz}  =\mathbf{m}_{j,k,nz+1},\quad j = 1,\cdots,nx,k=1,\cdots,ny.
\end{align*}
\eqref{equ:implicit_midpoints} is an implicit scheme with the truncation error $\mathcal{O}(h^2 + \Delta t^2)$ with $\Delta x = \Delta y = \Delta z = h$.
At each step, a nonlinear system of equations has to be solved. In addition, for micromagnetics simulations, evaluation of the stray field $\mathbf{h}_{s}^{n+1}$ is also computationally expensive.

To overcome these issues, we propose the following second-order semi-implicit scheme
\begin{equation}
  \label{equ:2-semi-implicit-illg}
  \left\{\begin{aligned}
  \frac{\mathbf{\tilde{m}}^{n+1} - \mathbf{m}^{n-1}}{2\Delta t} = &-\mathbf{m}^n\times\left(\epsilon\Delta_h\left(\frac{\mathbf{\tilde{m}}^{n+1} + \mathbf{m}^{n-1}}{2}\right) + \mathbf{f}(\mathbf{m}^n)\right) +\\ &\alpha\mathbf{m}^n\times\left(\frac{\mathbf{\tilde{m}}^{n+1}-\mathbf{m}^{n-1}}{2\Delta t} + \eta\frac{\mathbf{\tilde{m}}^{n+1}-2\mathbf{m}^n+\mathbf{m}^{n-1}}{\Delta t^2}\right).\\
\mathbf{m}^{n+1} = &\frac 1{\abs{\mathbf{\tilde{m}}}^{n+1}}\mathbf{\tilde{m}}^{n+1}.
\end{aligned}\right.
\end{equation}
By rewriting \eqref{equ:2-semi-implicit-illg}, at each step, only a linear system of equations with the unsymmetric structure needs to be solved
\begin{multline}\label{equ:2-semi-implicit-illglinear}
\left(I + \epsilon\Delta t\mathbf{m}^n\times\Delta_h - \alpha\left(1+\frac{2\eta}{\Delta t}\right)\mathbf{m}^n\times\right)\mathbf{\tilde{m}}^{n+1} = \mathbf{m}^{n-1} - \epsilon\Delta t\mathbf{m}^n\times\Delta_h\mathbf{m}^{n-1} - \\
\alpha\left(1-\frac{2\eta}{\Delta t}\right)\mathbf{m}^n\times\mathbf{m}^{n-1} - 2\Delta t\mathbf{m}^n\times\mathbf{f}(\mathbf{m}^n).
\end{multline}
For micromagnetics simulations, to update $\mathbf{m}^{n+1}$, only $\mathbf{h}_{s}^{n}$ is needed.

Following \cite{XIE2020109104}, we establish the unique solvability of the proposed scheme \eqref{equ:2-semi-implicit-illg} as follows. First, the unique solvability of \eqref{equ:2-semi-implicit-illglinear} is given by the following proposition.
\begin{prop}
  \label{lem:unique-solvability}
Denote $S = \epsilon\Delta t\Delta_h - \alpha\left(1+\frac{2\eta}{\Delta t}\right) I$, $I$ the identity operator, and $A = \mathbf{m}^n\times$ an antisymmetric matrix. Given $\mathbf{m}^{n-1}$ and $\mathbf{m}^n$, then $\det{(I + A S)} \neq 0$, that is, \eqref{equ:2-semi-implicit-illglinear} admits a unique solution for any positive $h$ and $\Delta t$.
\end{prop}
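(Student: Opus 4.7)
The plan is to prove $\det(I + AS) \neq 0$ by showing that the kernel of $I + AS$ is trivial, exploiting the symmetric/antisymmetric split of $S$ and $A$. First I would establish two structural facts: $S$ is symmetric and negative definite on the discrete function space, and $A$ is antisymmetric as a global $3N \times 3N$ matrix (where $N$ is the number of grid points), because it is block diagonal with each $3\times 3$ block being the skew cross-product matrix associated with $\mathbf{m}^n_{j,k,l}$.

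For the first fact, I would invoke that the discrete Laplacian $\Delta_h$ under homogeneous Neumann boundary conditions is symmetric and negative semi-definite with respect to the mesh-weighted Euclidean inner product. Then $-S = -\epsilon\Delta t\,\Delta_h + \alpha(1 + 2\eta/\Delta t)I$ is the sum of a positive semi-definite operator and a strictly positive multiple of the identity (since $\alpha,\eta,\Delta t>0$), so $-S$ is positive definite; equivalently $S$ is symmetric and negative definite, uniformly in $h$.

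The main computational step is to left-multiply the homogeneous equation $(I + AS)\mathbf{x} = 0$ by $S$, yielding $S\mathbf{x} + SAS\mathbf{x} = 0$, and then take the standard inner product with $\mathbf{x}$. Because $(SAS)^T = S^T A^T S^T = -SAS$, the operator $SAS$ is antisymmetric and therefore $\langle \mathbf{x}, SAS\mathbf{x}\rangle = 0$. What remains is $\langle \mathbf{x}, S\mathbf{x}\rangle = 0$, and negative definiteness of $S$ forces $\mathbf{x} = 0$. Hence $I + AS$ is injective, and since it is a square matrix it is invertible, giving the desired non-vanishing of the determinant for any positive $h$ and $\Delta t$.

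The point I expect to require the most care is the bookkeeping that $A$ and $S$ really do satisfy $A^T = -A$ and $S^T = S$ as operators on the full vector-valued discrete state, rather than merely block-wise. Once the component/grid-point index ordering is fixed and the Neumann ghost values are incorporated consistently with the stencil in \eqref{LaplacianDiscrete}, the antisymmetry of $A$ and the self-adjointness of $\Delta_h$ follow directly, and crucially no smallness condition on $h$ or $\Delta t$ enters, which matches the unconditional solvability claimed in the proposition.
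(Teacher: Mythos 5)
Your argument is correct, and it reaches the conclusion by a genuinely different route than the paper. The paper factors the (definite) symmetric part as $C^TC$, invokes the determinant identity $\det(I+XY)=\det(I+YX)$ to pass to $\det(I+CAC^T)$, and then uses the spectral fact that an antisymmetric real matrix has only zero or purely imaginary eigenvalues, so that $1+\mu$ cannot vanish. You instead run a direct injectivity argument: from $(I+AS)\mathbf{x}=0$ you form $\langle \mathbf{x}, S\mathbf{x}\rangle + \langle \mathbf{x}, SAS\mathbf{x}\rangle = 0$, kill the second term by antisymmetry of $SAS$, and conclude $\mathbf{x}=0$ from definiteness of $S$. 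Both proofs rest on exactly the same two structural facts (symmetric definiteness of $S$, antisymmetry of $A$), but yours is more elementary --- no Cholesky factor, no Sylvester identity, no complex eigenvalues --- and it degrades gracefully: even if $S$ were only negative \emph{semi}-definite (e.g. $\alpha=0$), $\langle\mathbf{x},S\mathbf{x}\rangle=0$ still forces $S\mathbf{x}=0$ and hence $\mathbf{x}=-AS\mathbf{x}=0$. What the paper's spectral route buys in exchange is explicit knowledge of the eigenvalues $1\pm\lambda_j i$ of the coefficient matrix, which it reuses immediately afterwards for the condition-number estimates \cref{equ:condition-number1D,equ:condition-number3D}; your kernel argument does not yield that information. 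One further point in your favor: you correctly identify $\Delta_h$ with Neumann boundary conditions as negative semi-definite and $S$ as negative definite, whereas the paper's statement that ``$\Delta_h$ is symmetric positive definite'' has the sign wrong (and silently substitutes $S$ by $C^TC$); your careful bookkeeping of the signs and of the global antisymmetry of the block-diagonal cross-product operator is exactly the care this proposition needs.
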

\begin{proof}
Since $\Delta_h$ is a symmetric positive definite matrix, there exists a nonsingular matrix such that $\Delta_h = C^TC$. Therefore, for the coefficient matrix $I + AS$ in \eqref{equ:2-semi-implicit-illglinear}, we have
  \begin{align*}
    \det{(I + AS)} = \det{(I + AC^TC)} = \det{(I + CAC^T)},
  \end{align*}
where $(CAC^T)^T = -CAC^T$ is an antisymmetric matrix. Thus eigenvalues of $CAC^T$ are either $0$ or pure imaginary, $\det{(I + AS)} \neq 0$, which is independent of $h$ and $\Delta t$.
\end{proof}
Second, in \eqref{equ:2-semi-implicit-illg}, a projection step is applied after solving \eqref{equ:2-semi-implicit-illglinear}. The following proposition guarantees that the denominator is always nonzero.
\begin{prop}
\label{lem:unique-solvability2}
If $\mathbf{m}^{0}\cdot\mathbf{m}^{1}\neq 0$ in the pointwise sense, then $\abs{\tilde{\mathbf{m}}^{n}} \neq 0$ at any step $n$.
\end{prop}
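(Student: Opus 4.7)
The plan is to exploit the cross-product structure of the update rule in \eqref{equ:2-semi-implicit-illg}. Every term on the right-hand side of the first equation carries a prefactor $\mathbf{m}^n\times$, and this prefactor is evaluated at the \emph{same} grid point where the test vector $\mathbf{m}^n$ sits. Hence at each grid node $(j,k,l)$ the right-hand side is pointwise orthogonal to $\mathbf{m}^n_{j,k,l}$, regardless of the fact that $\Delta_h$ couples neighboring nodes. Taking the Euclidean dot product of \eqref{equ:2-semi-implicit-illg} with $\mathbf{m}^n$ at each grid point therefore yields
\begin{equation*}
\mathbf{m}^n\cdot\tilde{\mathbf{m}}^{n+1} = \mathbf{m}^n\cdot\mathbf{m}^{n-1},
\end{equation*}
which is the key identity driving the argument.

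From here I would induct on $n$, with inductive hypothesis $\mathbf{m}^{n-1}\cdot\mathbf{m}^{n}\neq 0$ pointwise. The base case $n=1$ is the assumed initial compatibility $\mathbf{m}^0\cdot\mathbf{m}^1\neq 0$. For the inductive step, the identity above together with the hypothesis gives $\mathbf{m}^n\cdot\tilde{\mathbf{m}}^{n+1}\neq 0$, which forces $\tilde{\mathbf{m}}^{n+1}\neq 0$ pointwise (otherwise the dot product would vanish). Since \Cref{lem:unique-solvability} already guarantees that $\tilde{\mathbf{m}}^{n+1}$ is uniquely determined, the projection step $\mathbf{m}^{n+1}=\tilde{\mathbf{m}}^{n+1}/\abs{\tilde{\mathbf{m}}^{n+1}}$ is well-defined, and
\begin{equation*}
\mathbf{m}^{n}\cdot\mathbf{m}^{n+1} = \frac{\mathbf{m}^n\cdot\tilde{\mathbf{m}}^{n+1}}{\abs{\tilde{\mathbf{m}}^{n+1}}} = \frac{\mathbf{m}^n\cdot\mathbf{m}^{n-1}}{\abs{\tilde{\mathbf{m}}^{n+1}}}\neq 0,
\end{equation*}
closing the induction and simultaneously yielding $\abs{\tilde{\mathbf{m}}^{n+1}}\neq 0$.

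I expect the argument to be essentially algebraic, with no obstacles involving the stepsizes $h$ or $\Delta t$. The only place that requires care is the very first step: one must verify that the orthogonality to $\mathbf{m}^n$ is genuinely pointwise even though the discrete Laplacian $\Delta_h$ stencils neighboring values of $\tilde{\mathbf{m}}^{n+1}$ and $\mathbf{m}^{n-1}$. This is true precisely because the factor $\mathbf{m}^n\times$ sits \emph{outside} $\Delta_h$ and is evaluated at the same node at which the dot product is taken, so the coupling through the stencil is harmless. Aside from this observation, everything reduces to the normalization identity coming from the projection step, making the proof short and self-contained.
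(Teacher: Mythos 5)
Your proposal is correct and follows essentially the same route as the paper: the key identity $\mathbf{m}^n\cdot\tilde{\mathbf{m}}^{n+1}=\mathbf{m}^n\cdot\mathbf{m}^{n-1}$ obtained by dotting the scheme with $\mathbf{m}^n$, followed by induction through the projection step, is exactly the paper's argument (which writes out $n=1,2$ and says ``repeating this process''). Your explicit remark that the orthogonality is genuinely pointwise because $\mathbf{m}^n\times$ sits outside the stencil of $\Delta_h$ is a worthwhile clarification the paper leaves implicit, but it does not change the substance of the proof.
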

\begin{proof}
Multiplying the first equation in \eqref{equ:2-semi-implicit-illg} by $\mathbf{m}^n$ produces
\[
\mathbf{\tilde{m}}^{n+1}\cdot \mathbf{m}^n = \mathbf{m}^n\cdot\mathbf{m}^{n-1}.
\]
When $n=1$, we have $\mathbf{\tilde{m}}^{2}\cdot \mathbf{m}^1 = \mathbf{m}^1\cdot\mathbf{m}^{0} \neq 0$, which implies $\abs{\mathbf{\tilde{m}}^{2}}\neq0$. When $n=2$, we have $\mathbf{\tilde{m}}^{3}\cdot \mathbf{m}^2 = \mathbf{m}^2\cdot\mathbf{m}^{1} = \frac1{\abs{\mathbf{\tilde{m}}^{2}}} \mathbf{m}^1\cdot\mathbf{m}^{0} \neq 0$, which implies $\abs{\mathbf{\tilde{m}}^{3}}\neq0$. Repeating this process completes the proof.
\end{proof}

\begin{remark}
After time rescaling $t\rightarrow(1+\alpha^2)t$, \eqref{equ:dimensionlessILLG} is equivalent to the LL form
  \begin{equation}
    \partial_t\mathbf{m} = -\mathbf{m}\times\left(\mathbf{h} - \tilde\eta \partial_{tt}\mathbf{m} \right) - \alpha\mathbf{m}\times\left(\mathbf{m}\times\left(\mathbf{h} - \tilde\eta \partial_{tt}\mathbf{m} \right)\right),
  \end{equation}
  where $\tilde\eta = \frac{\alpha\eta}{(1+\alpha^2)^2}$. Then the corresponding second-order semi-implicit scheme reads as
  \begin{equation}
    \label{equ:2-semi-implicit-illg-equivalent}
    \left\{\begin{aligned}
    \frac{\mathbf{\tilde{m}}^{n+1} - \mathbf{m}^{n-1}}{2\Delta t} &= -\mathbf{m}^n\times\left(\epsilon\Delta_h\left(\frac{\mathbf{\tilde{m}}^{n+1} + \mathbf{m}^{n-1}}{2}\right) - \tilde\eta\frac{\mathbf{\tilde{m}}^{n+1}-2\mathbf{m}^n+\mathbf{m}^{n-1}}{\Delta t^2} + \mathbf{f}(\mathbf{m}^n)\right) -\\ &\alpha\mathbf{m}^n\times\left(\mathbf{m}^n\times\left(\epsilon\Delta_h\left(\frac{\mathbf{\tilde{m}}^{n+1} + \mathbf{m}^{n-1}}{2}\right) - \tilde\eta\frac{\mathbf{\tilde{m}}^{n+1}-2\mathbf{m}^n+\mathbf{m}^{n-1}}{\Delta t^2} + \mathbf{f}(\mathbf{m}^n)\right)\right),\\
    \mathbf{m}^{n+1} &= \frac 1{\abs{\mathbf{\tilde{m}}^{n+1}}}\mathbf{\tilde{m}}^{n+1}.
    \end{aligned}\right.
  \end{equation}
\end{remark}

Next, we discuss how to solve the linear systems of equations in \eqref{equ:2-semi-implicit-illglinear}. Due to the unsymmetric structure, the GMRES solver is employed. Since the convergence of GMRES depends on the condition number of the linear system, we provide a heuristic demonstration on how the condition number depends on the damping parameter $\alpha$ and the characteristic timescale of the inertial effect $\eta$.

For simplicity, consider $\mathbf{m}^n = \mathbf{e}_1$ and let $S = \epsilon\Delta t\Delta_h - \alpha(1+\frac{2\eta}{\Delta t})I$. In 1D, for the homogeneous Neumann boundary condition, the eigenvalues of $\Delta_x$ are $\frac{-4}{\Delta x^2}\sin^2(\frac{j\pi\Delta x}{2})$~$(j=1,\cdots, nx)$, and the eigenvalues of $S$ are $\lambda_j = -\frac{4\epsilon\Delta t}{\Delta x^2}\sin^2(\frac{j\pi\Delta x}{2}) - \alpha(1+\frac{2\eta}{\Delta t})$, respectively.
Therefore, the $3nx$ eigenvalues of the antisymmetric matrix
\begin{equation*}
  \mathbf{e}_1\times S = \begin{pmatrix}
    0 & 0 & 0\\
    0 & 0 & -S\\
    0 & S & 0
  \end{pmatrix}
\end{equation*}
are
\begin{equation*}
  \underbrace{0, \cdots, 0}_{nx}, \underbrace{\pm\lambda_1i, \cdots, \pm\lambda_{nx}i}_{nx}.
\end{equation*}
Consequently, eigenvalues of $\left(I + \epsilon\Delta t\mathbf{m}^n\times\Delta_x - \alpha\left(1+\frac{2\eta}{\Delta t}\right)\mathbf{m}^n\times\right)$ are
\begin{equation*}
  \underbrace{1, \cdots, 1}_{nx}, 1\pm\lambda_1i, \cdots, 1\pm\lambda_{nx}i.
\end{equation*}
The condition number of $\left(I + \epsilon\Delta t\mathbf{m}^n\times\Delta_x - \alpha\left(1+\frac{2\eta}{\Delta t}\right)\mathbf{m}^n\times\right)$ is
\begin{equation}
  \label{equ:condition-number1D}
  \kappa = \sqrt{1 + \left(\frac{4\epsilon\Delta t}{\Delta x^2} + \alpha\left(1+\frac{2\eta}{\Delta t}\right)\right)^2}.
\end{equation}
Similarly, in 3D, the eigenvalues of $\Delta_h$ are
$$
\lambda_{jkl} = -\frac{4}{\Delta x^2}\sin^2\left(\frac{j\pi\Delta x}{2}\right) -\frac{4}{\Delta y^2}\sin^2\left(\frac{k\pi\Delta y}{2}\right) -\frac{4}{\Delta z^2}\sin^2\left(\frac{l\pi\Delta z}{2}\right),
$$
and the condition number of $\left(I + \epsilon\Delta t\mathbf{m}^n\times\Delta_h - \alpha\left(1+\frac{2\eta}{\Delta t}\right)\mathbf{m}^n\times\right)$ is
\begin{equation}\label{equ:condition-number3D}
\kappa = \sqrt{1 + \left[4\epsilon\Delta t\left(\frac{1}{\Delta x^2} + \frac{1}{\Delta y^2} + \frac{1}{\Delta z^2}\right) + \alpha\left(1+\frac{2\eta}{\Delta t}\right)\right]^2}.
\end{equation}
According to the convergence theory of GMRES \cite{GMRES1986}, the number of iterations is proportional to the condition number \eqref{equ:condition-number1D} in 1D or \eqref{equ:condition-number3D} in 3D, respectively. For smaller $\alpha$ and $\tau$, less number of iterations is needed in GMRES.

Numerically, the number of iterations in GMRES for 3D micromagnetics simulations is recorded for different damping parameters $\alpha$ and different characteristic timescales $\tau$ in \cref{tab:number-iterations}. Material parameters can be found in \Cref{subsec:simulations}. It is easy to find that the number of iterations in GMRES reduces for a smaller damping parameter and a smaller characteristic parameter, which is consistent with our heuristic derivation.
\begin{table}[htbp]
  \centering
  \caption{Number of iterations in GMRES at a single step with $\mathbf{m}^{0} = \mathbf{m}^1 = \mathbf{e}_1$ and $\Delta t = 0.1\mathrm{ps}$ in micromagnetics simulations. The stopping tolerance in GMRES is set to be $1.0e-11$.}
  \begin{tabular}{||c|c|c||}
    \hline
    \diagbox{Damping}{Number of iterations}{Inertial} & $\tau = 1.0\times10^{-13}\mathrm{s}$ & $\tau = 1.0\times10^{-11}\mathrm{s}$ \\
    \hline
    $\alpha = 0.1$ & 9 & 9 \\
    \hline
    $\alpha = 0.01$ &7 & 11 \\
    \hline
    $\alpha = 0.001$ & 6 & 9 \\
    \hline
  \end{tabular}
  \label{tab:number-iterations}
\end{table}

\section{Numerical results}
\label{sec:simulations}

We first verify the second-order accuracy in both time and space in 1D and 3D. For comparison, the full iLLG equation is simplified as
\begin{equation}
  \label{equ:simplicity-equation}
  \partial_t\mathbf{m} = -\mathbf{m}\times\Delta\mathbf{m} + \alpha\left(\partial_t\mathbf{m} + \eta \partial_{tt}\mathbf{m}\right) + \mathbf{g},
\end{equation}
where $\mathbf{g}$ is the source term. The $L^{\infty}$ error $\norm{\mathbf{m}_{\mathrm{e}} - \mathbf{m}_{\mathrm{h}}}_{\infty}$ is recorded with $\mathbf{m}_{\mathrm{e}}$ being the exact solution and $\mathbf{m}_{\mathrm{h}}$ being the numerical solution, respectively.

\subsection{Accuracy check}
\label{subsec:accuracy-check}

Consider the exact solution in 1D for \eqref{equ:simplicity-equation}
\begin{equation*}
\mathbf{m}_{\mathrm{e}} = (\cos(\bar{x})\sin(t), \sin(\bar{x})\sin(t), \cos(t))^T
\end{equation*}
with $\bar{x}=x^2(1-x)^2$. The source term is $\mathbf{g} = \partial_t\mathbf{m}_{\mathrm{e}} + \mathbf{m}_{\mathrm{e}}\times\partial_{xx}\mathbf{m}_{\mathrm{e}} - \alpha\left(\partial_t\mathbf{m}_{\mathrm{e}} + \eta\partial_{tt}\mathbf{m}_{\mathrm{e}}\right)$. The final time $T = 0.5$. As shown in \cref{tab:1daccuracy}, the second-order accuracy is obtained in both time and space.
\begin{table}[htbp]
	\centering
    \caption{The $L^\infty$ error in terms of the temporal stepsize and the spatial gridsize in 1D. Temporal accuracy check: $\Delta x = 1.d-03$; Spatial accuracy check: $\Delta t = 5.0e-03$. $nx = 1/\Delta x$ and $nt = T/\Delta t$.}
	\begin{tabular}{||c|c|ccccc|c||}
		\hline
		\multirow{4}{*}{\makecell[c]{$\alpha = 0.0$\\$\eta=0.0$}} & \multirow{2}{*}{Space} & $nx$ & 20 & 40 & 80 & 160 & order \\
		& & error & 2.74e-4 & 6.98e-05 & 1.88e-05 & 6.07e-06 & 1.84\\
		\cline{2-8}
		& \multirow{2}{*}{Time} & $nt$ & 20 & 40 & 80 & 160 & order \\
		& & error & 4.56e-05 & 1.15e-05 & 2.96e-06 & 8.23e-07 & 1.93\\
		\hline
		\multirow{4}{*}{\makecell[c]{$\alpha = 0.01$\\$\eta=0.0$}} & \multirow{2}{*}{Space} & $nx$ & 20 & 40 & 80 & 160 & order \\
		& & error & 2.73e-04 & 6.97e-05 & 1.88e-05 & 6.07e-06 & 1.84\\
		\cline{2-8}
		& \multirow{2}{*}{Time} & $nt$ & 20 & 40 & 80 & 160 & order \\
		& & error & 4.56e-05 & 1.15e-05 & 2.96e-06 & 8.23e-07 & 1.93\\
		\hline
		\multirow{4}{*}{\makecell[c]{$\alpha = 0.01$\\$\eta=100.0$}} & \multirow{2}{*}{Space} & $nx$ & 20 & 40 & 80 & 160 & order \\
		& & error & 9.95e-05 & 2.56e-05 & 6.92e-06 & 2.24e-06 & 1.83\\
		\cline{2-8}
		& \multirow{2}{*}{Time} & $nt$ & 20 & 40 & 80 & 160 & order \\
		& & error & 1.63e-05 & 4.19e-06 & 1.09e-06 & 3.03e-07 & 1.92\\
		\hline
		\multirow{4}{*}{\makecell[c]{$\alpha = 0.01$\\$\eta=1000.0$}} & \multirow{2}{*}{Space} & $nx$ & 20 & 40 & 80 & 160 & order \\
		& & error & 2.39e-05 & 7.62e-06 & 2.12e-06 & 5.49e-07 & 1.82\\
		\cline{2-8}
		& \multirow{2}{*}{Time} & $nt$ & 20 & 40 & 80 & 160 & order \\
		& & error & 1.83e-06 & 4.68e-07 & 1.21e-07 & 3.34e-08 & 1.93\\
		\hline
	\end{tabular}
	\label{tab:1daccuracy}
\end{table}

Consider the 3D exact solution
\begin{equation*}
\mathbf{m}_{\mathrm{e}} = (\cos(\bar{x}\bar{y}\bar{z})\sin(t), \sin(\bar{x}\bar{y}\bar{z})\sin(t), \cos(t))^T
\end{equation*}
with $\bar{y}=y^2(1-y)^2$ and $\bar{z}=z^2(1-z)^2$. The source term is $\mathbf{g} = \partial_t\mathbf{m}_{\mathrm{e}} + \mathbf{m}_{\mathrm{e}}\times\Delta\mathbf{m}_{\mathrm{e}} - \alpha\left(\partial_t\mathbf{m}_{\mathrm{e}} + \eta\partial_{tt}\mathbf{m}_{\mathrm{e}}\right)$. For the temporal accuracy check, $\Omega = [0, 0.01]^3$ is discretized uniformly with $10$ mesh grids along each direction and $T=0.5$. For the spatial accuracy check, $\Omega=[0, 1]^3$ is discretized uniformly with $6,\,8,\,10,\,12$ grid points along each direction, respectively, and $T = 0.1$ with $\Delta t = 1.0e-03$. The damping parameter $\alpha$ and the inertial parameter $\eta$ are $0.01$ and $1000$, respectively. \cref{fig:3daccuracy} plots the $L^\infty$ error in terms of the temporal stepsize and the spatial gridsize in 3D, respectively. The second-order accuracy is also obtained in both time and space.
\begin{figure}[htbp]
	\centering
	\subfloat[Temporal accuracy]{\includegraphics[width=2.5in]{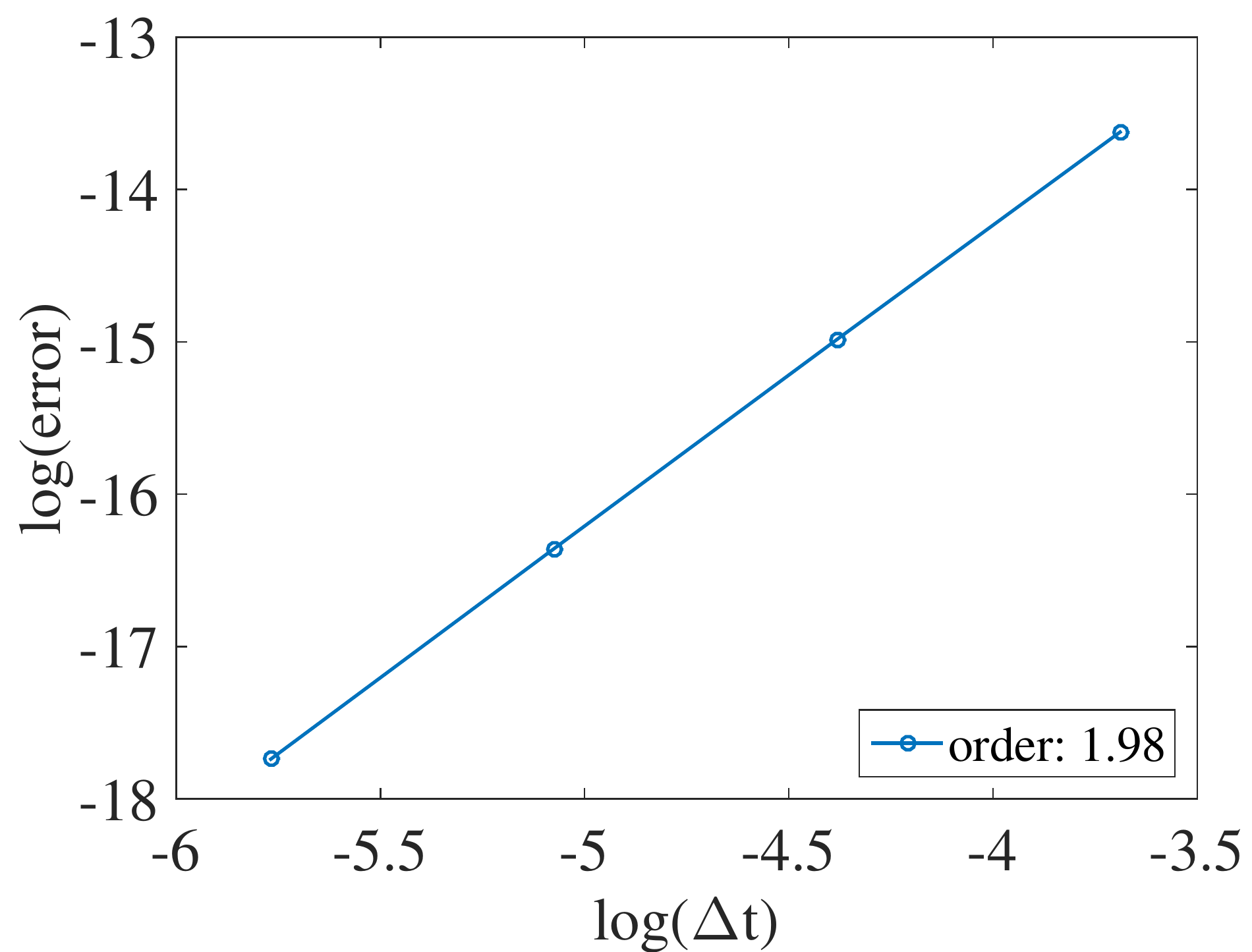}}
	\subfloat[Spatial accuracy]{\includegraphics[width=2.5in]{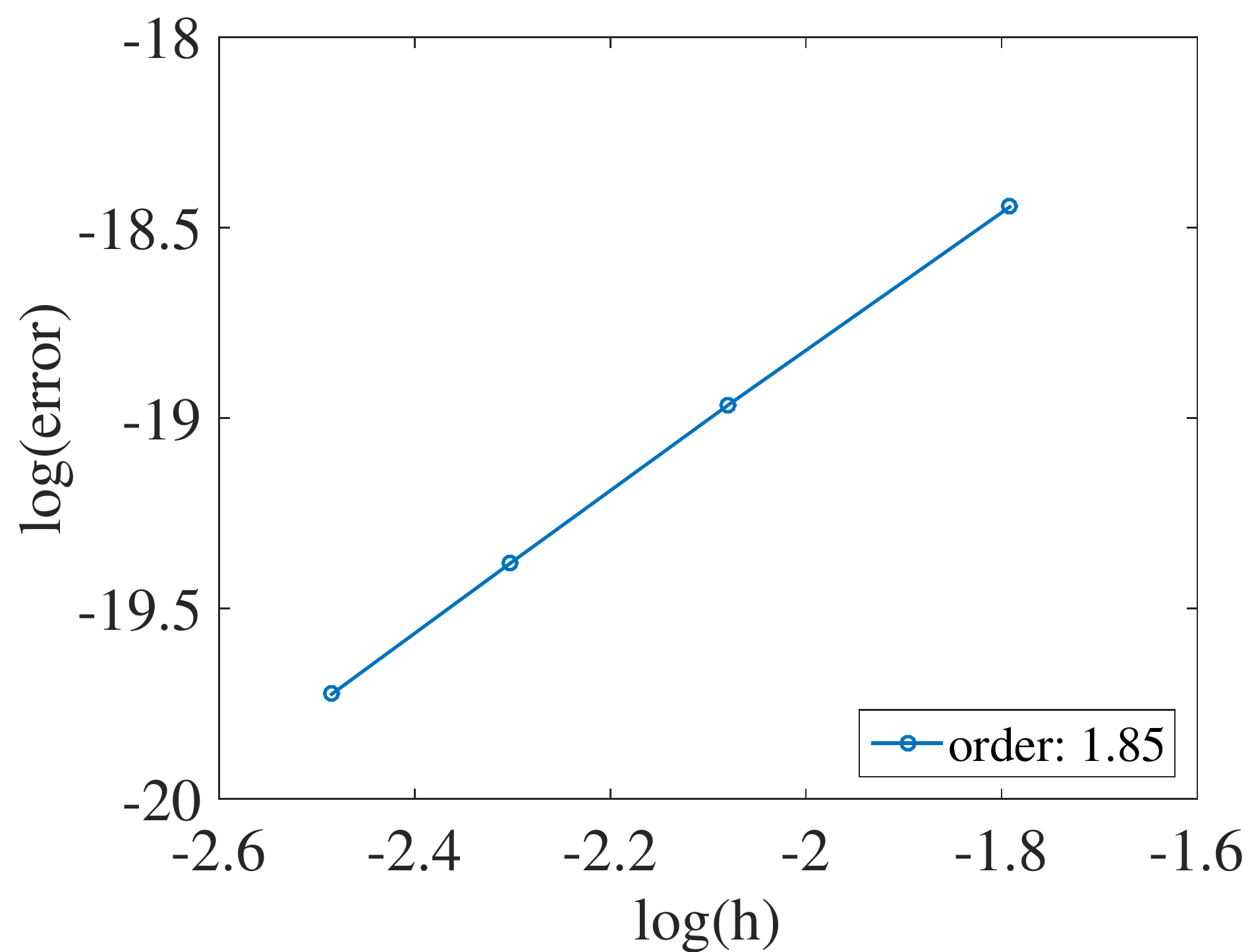}}
	\caption{The $L^\infty$ error in terms of the temporal stepsize and the spatial gridsize in 3D. The second-order accuracy is obtained in both time and space.}
	\label{fig:3daccuracy}
\end{figure}

\subsection{Micromagnetics simulations}
\label{subsec:simulations}

In this part, we simulate the ultrafast magnetization dynamics using the full iLLG equation to study the inertial effect of ferromagnets. Due to the lack of the exact solution, the proposed method is initialized with $\mathbf{m}^{1} = \mathbf{m}^{0}$, in consistency with the initial condition proposed in \Cref{rk:initial-condition}. The setup is physically sound since the ferromagnetic material is initially under equilibrium and no external field is applied. Since the stray field term typically dominates the LL energy in micromagnetics simulations, we first neglect it to check the importance of the inertial term. After that, we turn on the stray field and study the magnetization dynamics under an external magnetic field with high frequency.

In the first test, parameters of the ferromagnetic sample are: $200\mathrm{nm}\times100\mathrm{nm}\times5\mathrm{nm}$ with the mesh size $4\mathrm{nm}\times4\mathrm{nm}\times5\mathrm{nm}$, $\alpha = 0.02$, $M_s = 8.0\times10^5\mathrm{A}/\mathrm{m}$, $A = 1.3\times10^{-11}\mathrm{J}/\mathrm{m}$ and $K_u = 5.0\times10^2\mathrm{J}/\mathrm{m}^3$~\cite{NatPhys2020}. When the inertial parameter $\tau$ is set to be $1.0\times10^{-12}\mathrm{s}$, $\epsilon\sim\mathcal{O}(10^{-4})$, $q\sim\mathcal{O}(10^{-3})$ and $\eta\sim\mathcal{O}(10^{-1})$ after nondimensionalization. This explains why $\eta$ is set to be $1000$ in both \cref{tab:1daccuracy} and \cref{fig:3daccuracy} when $\epsilon$ is set to be $1$. The initial magnetization is chosen as the uniform state $\mathbf{m}^{0} = \mathbf{e}_1$ and a magnetic pulse with high frequency $\mathbf{H}_{\mathrm{e}} = \mu_0M_sF(t)\mathbf{e}_2$ is applied along the $\mathbf{e}_2$ direction, where $F(t) = 0.01\sin(2\pi ft)\chi_{0\leq t\leq 2.0\times10^{-12}}$ and $f = 500\mathrm{GHz}$~\cite{ruggeri2021numerical}. \cref{fig:LLG.vs.iLLG} plots spatially averaged magnetization profiles as functions of time when $\tau = 1.0\times10^{-10}\mathrm{s}$ and $T = 100\mathrm{ps}$ with $\Delta t = 10\mathrm{fs}$. Distinguishable differences are found between the results of LLG equation and iLLG equation. Oscillatory magnetization profiles along $\mathbf{e}_2$ and $\mathbf{e}_3$ directions are observed if the inertial term is present, which is known as the inertial dynamics. \cref{fig:energy_nostray} plots the LL energy (without the stray field term) as a function of time when a magnetic pulse is applied for the LLG equation and the iLLG equation. Results of the iLLG equation show a more profound response to the applied magnetic pulse, and thus the inertial dynamics is generated at sub-picosecond and picosecond timescales.
\begin{figure}[htbp]
  \centering
  \includegraphics[width=6in]{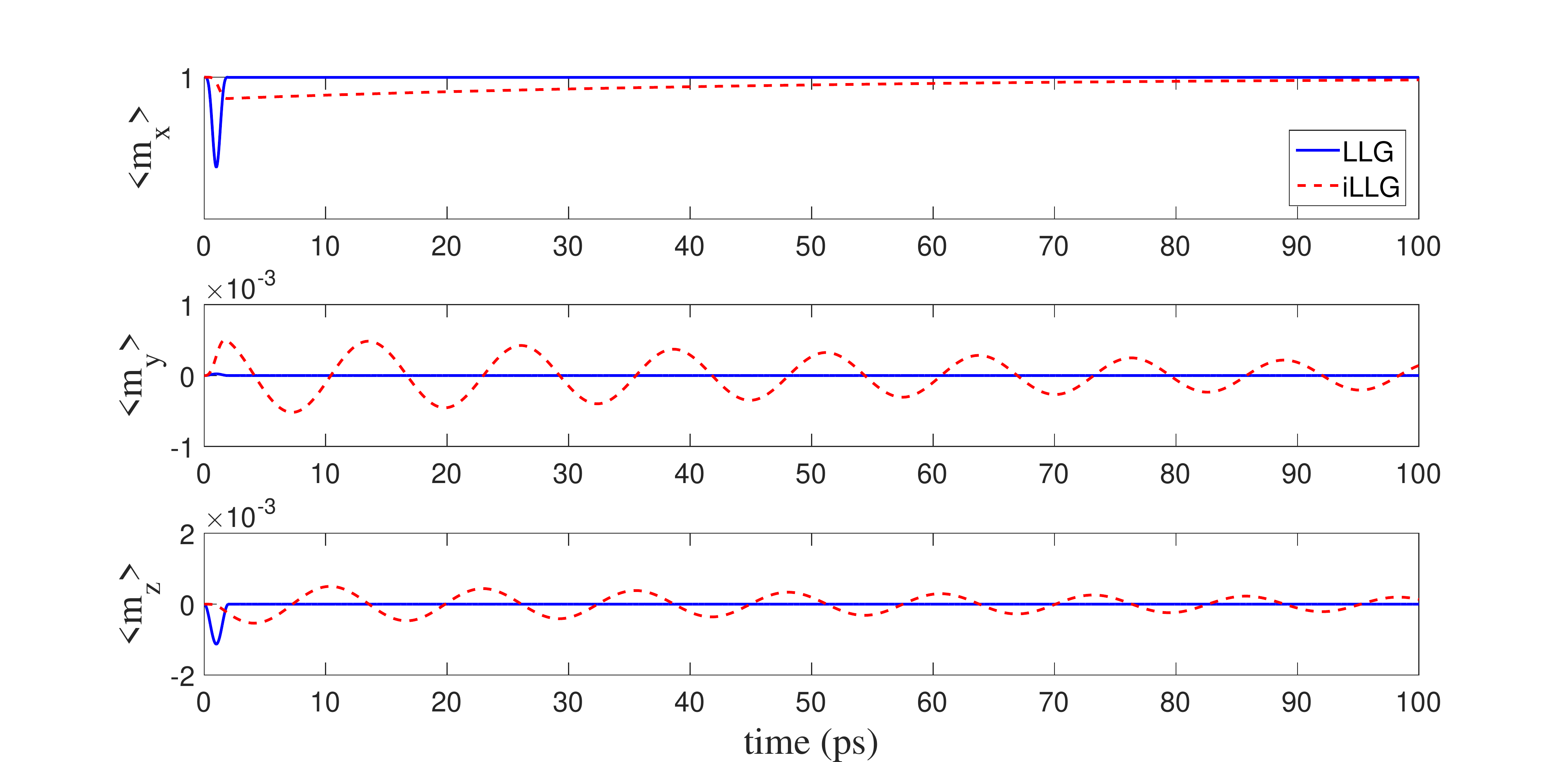}
  \caption{Spatially averaged magnetization profiles as functions of time when $\tau = 1.0\times10^{-10}\mathrm{s}$ and $T = 100\mathrm{ps}$ with $\Delta t = 10\mathrm{fs}$. Distinguishable differences are found between the results of LLG equation and iLLG equation. Oscillatory magnetization profiles (inertial dynamics) along $\mathbf{e}_2$ and $\mathbf{e}_3$ directions are observed if the inertial term is present.}
  \label{fig:LLG.vs.iLLG}
\end{figure}
\begin{figure}[htbp]
  \centering
  \includegraphics[width=6in]{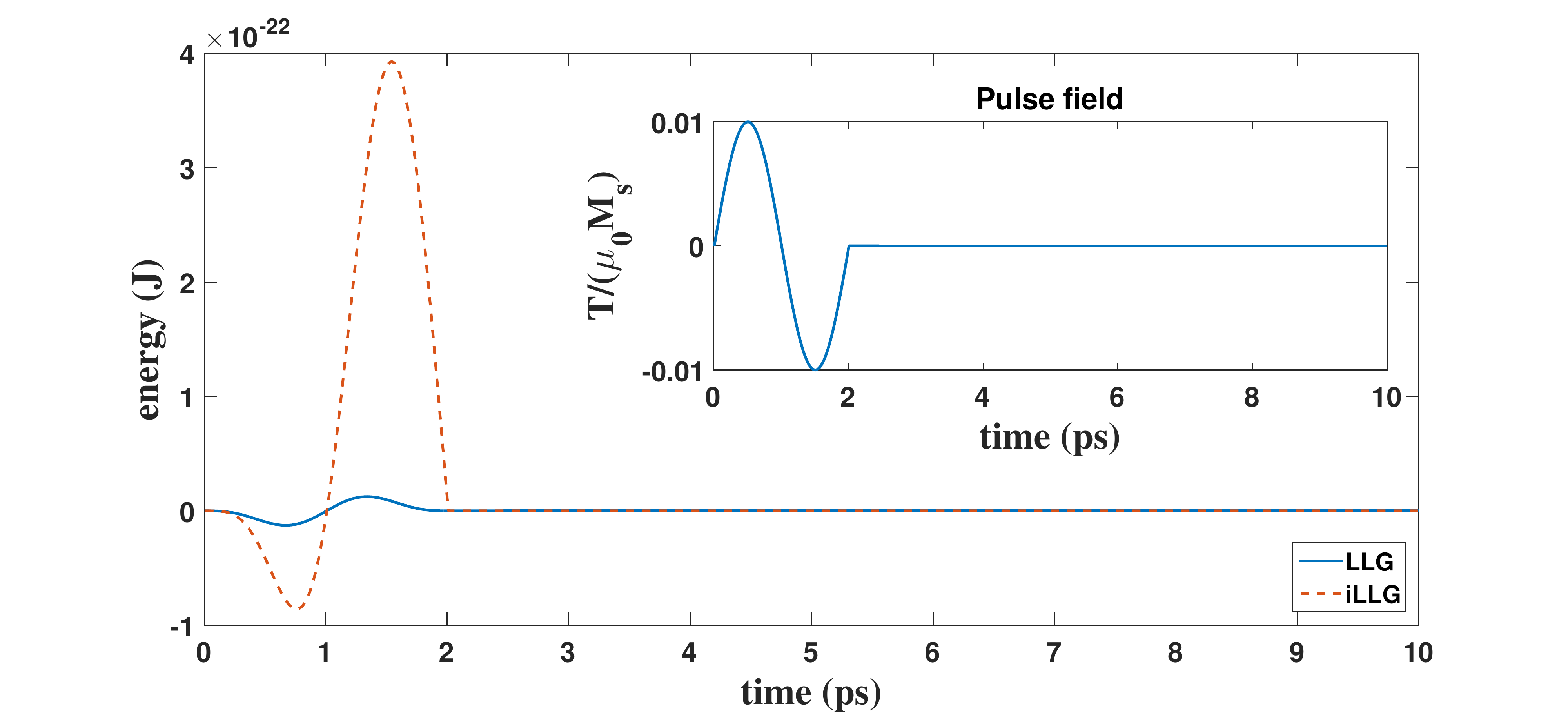}
  \caption{LL energy (without the stray field term) as a function of time when a magnetic pulse is applied for the LLG equation and the iLLG equation, respectively. Insert: profile of the magnetic pulse with frequency $500\mathrm{GHz}$ is applied during $0\leq t\leq 2\mathrm{ps}$.}
  \label{fig:energy_nostray}
\end{figure}

In the presence of stray field, due to its dominance, the ferromagnetic material relaxes to an equilibrium state without any obvious observation of oscillatory magnetization profiles originated from the inertial term over a wide range of $\tau$. Therefore, equilibrium states generated by the iLLG equation are consistent with those generated by the LLG equation; see \cref{fig:equilibrium_inertial}. The ferromagnet is of the size $2\mathrm{\mu m}\times1\mathrm{\mu m}\times0.02\mathrm{\mu m}$ with the meshsize $20\mathrm{nm}\times20\mathrm{nm}\times20\mathrm{nm}$, and $T = 2\mathrm{ns}$ with $\Delta t = 10\mathrm{fs}$. Here $\tau = 1.0\times10^{-10}\mathrm{s}$ and all the other parameters are the same as the Standard Problem \#1~\cite{NIST}.
\begin{figure}[htbp]
  \centering
  \subfloat[Flower state]{\includegraphics[width=3.in]{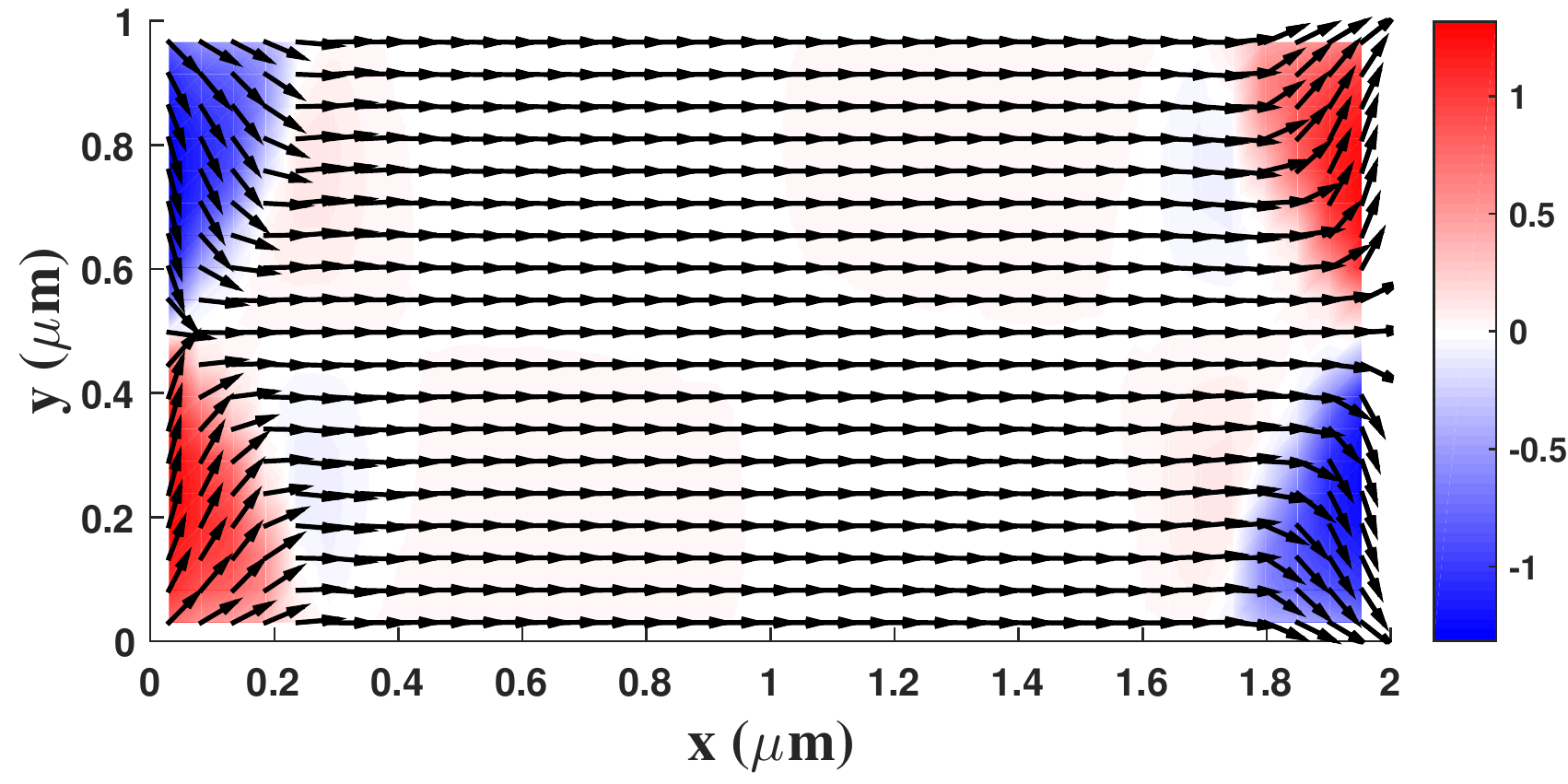}\label{subfig:Flower}}
  \subfloat[C state]{\includegraphics[width=3.in]{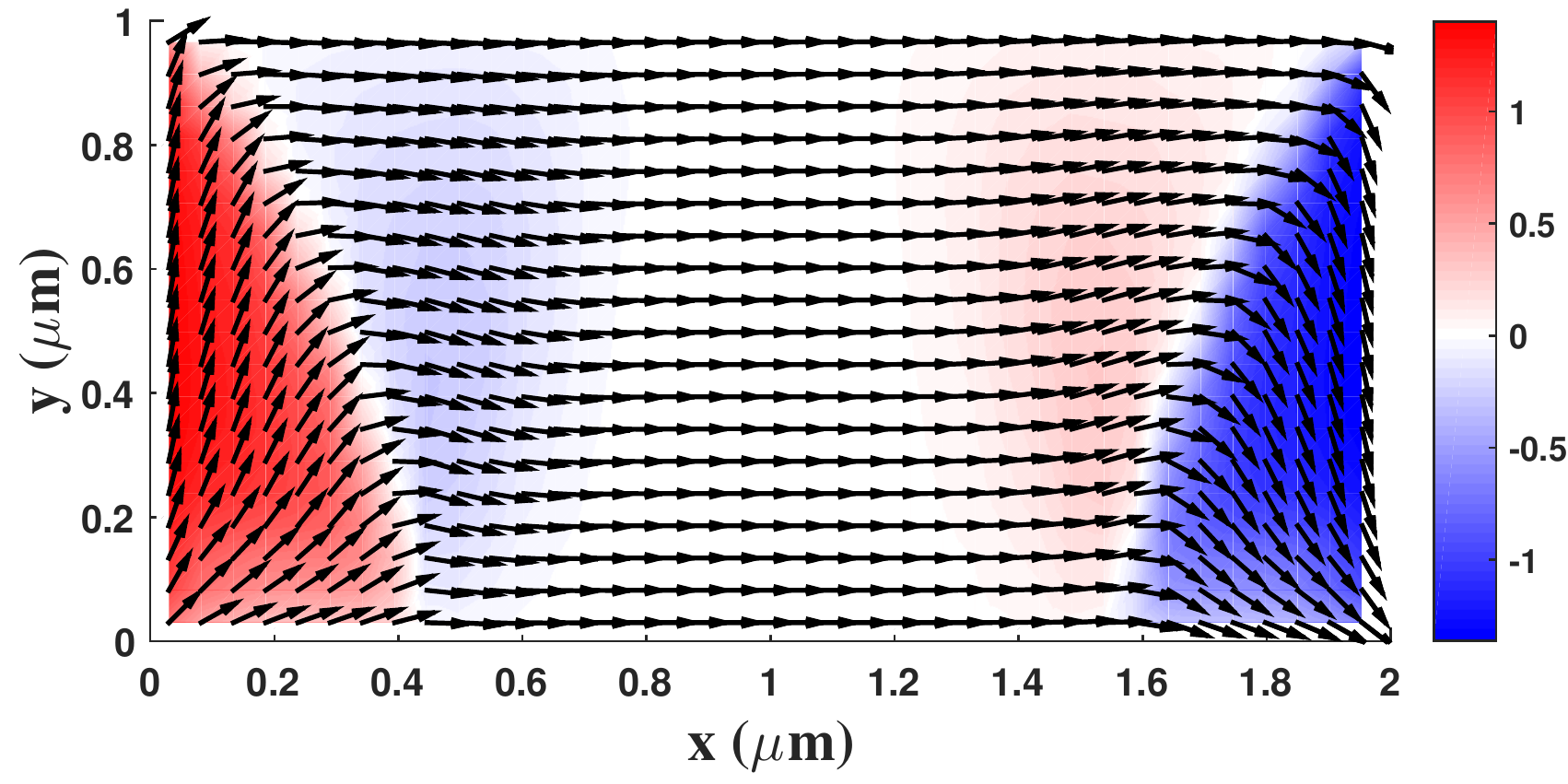}}
  \quad
  \subfloat[Diamond state]{\includegraphics[width=3.in]{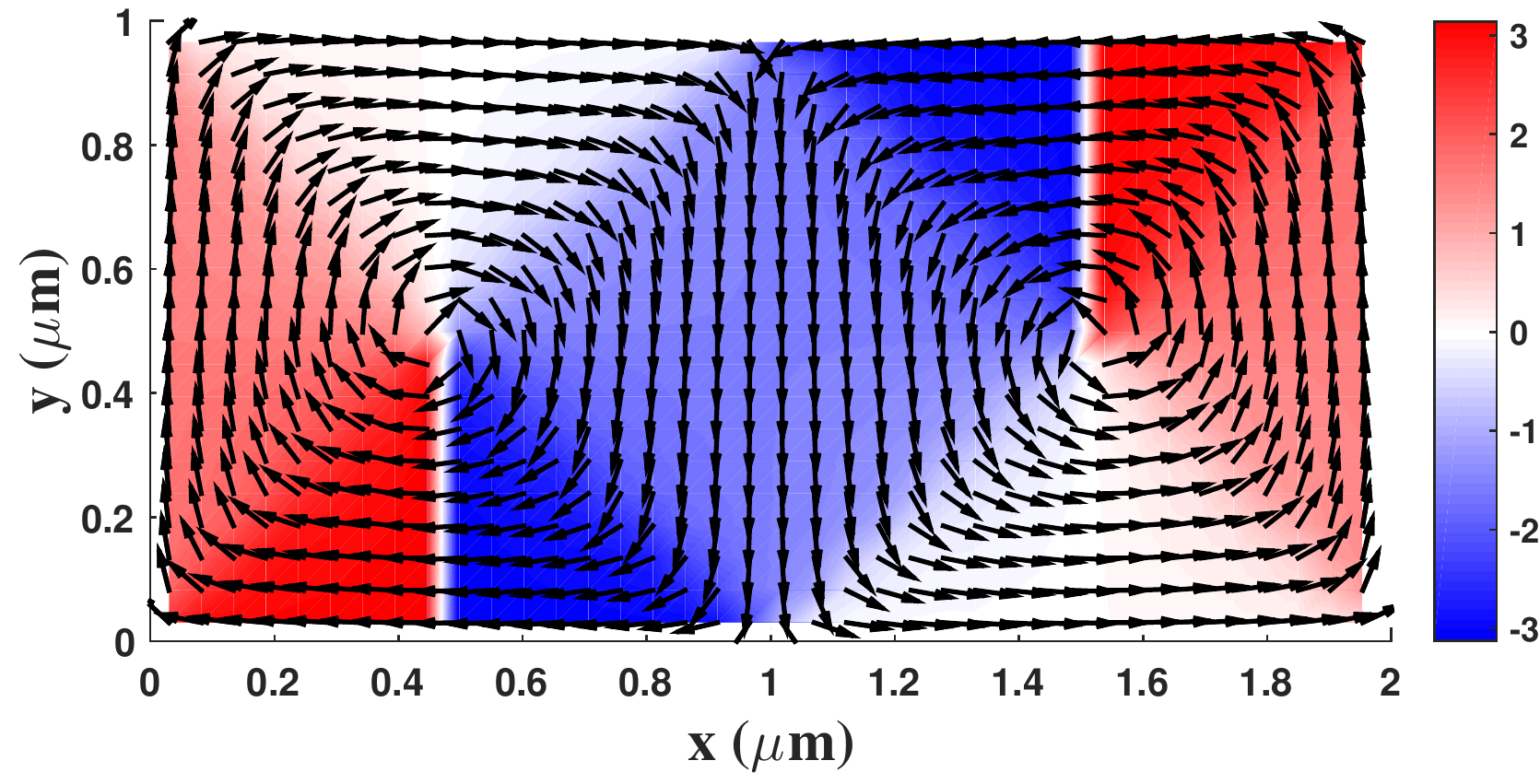}}
  \subfloat[S state]{\includegraphics[width=3.in]{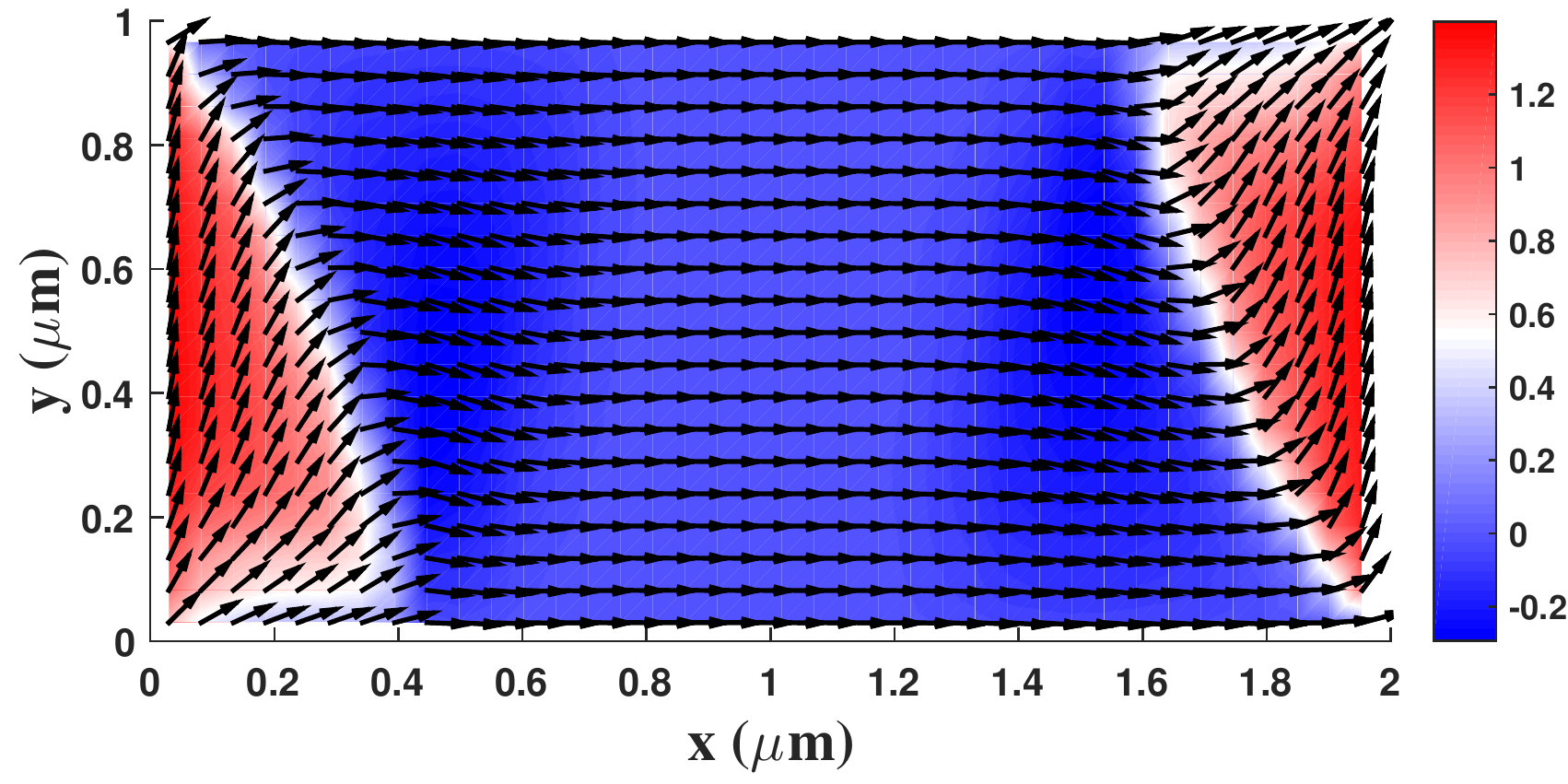}}
  \caption{Magnetization profiles at $2\mathrm{ns}$ with $\Delta t = 10\mathrm{fs}$ when the inertial parameter $\tau = 1.0\times10^{-10}\mathrm{s}$. The arrow represents the in-plane magnetization and the background color represents the angle between the in-plane magnetization and the $x$-axis.}
  \label{fig:equilibrium_inertial}
\end{figure}

Furthermore, following the study of hysteresis loops, we simulate magnetization dynamics using the iLLG equation with two sets of damping parameters and inertial parameters; see \cref{fig:hysteresis-loops}. For comparison, we list the coercive fields and remanent magnetization from NIST (mo96a) and our simulations. When the applied magnetic field is changed, the steady state is reached under the condition that the relative change in LL energy is less than $1.0\times10^{-7}$.
\begin{itemize}
  \item LLG equation (mo96a): coercive field x-loop/y-loop: 2.5/4.9 $\mathrm{mT}$; remanent magnetization $(m_x,m_y,m_z)$: x-loop: (0.15, 0.87, 0.00), y-loop: (-0.15, 0.87, 0.00).
  \item iLLG equation ($\alpha = 0.1$, $\tau = 1.0\times10^{-12}\mathrm{s}$ and $\Delta t = 1\mathrm{ps}$): coercive field x-loop/y-loop: 2.3/5.3 $\mathrm{mT}$; remanent magnetization $(m_x,m_y,m_z)$: x-loop: (0.20, 0.87, 0.11e-04), y-loop: (-0.15, 0.88, 0.14e-04).
  \item iLLG equation ($\alpha = 0.02$, $\tau = 1.0\times10^{-13}\mathrm{s}$ and $\Delta t = 0.1\mathrm{ps}$): coercive field x-loop/y-loop: 2.3/6.3 $\mathrm{mT}$; remanent magnetization $(m_x,m_y,m_z)$: x-loop: (0.20, 0.87, 0.25e-03), y-loop: (-0.14, 0.88, 0.86e-05).
\end{itemize}
The maximum deviation of coercive fields is $1.4 \mathrm{mT}$ and the maximum deviation of the component of remanent magnetization is $0.05$. Qualitative agreements are found for coercive fields and remanent magnetization, which implies the consistency between LLG and iLLG equations in long time simulations.
\begin{figure}[htbp]
  \centering
  \subfloat[LLG equation: mo96a~\cite{NIST}.]{\includegraphics[width=2.8in]{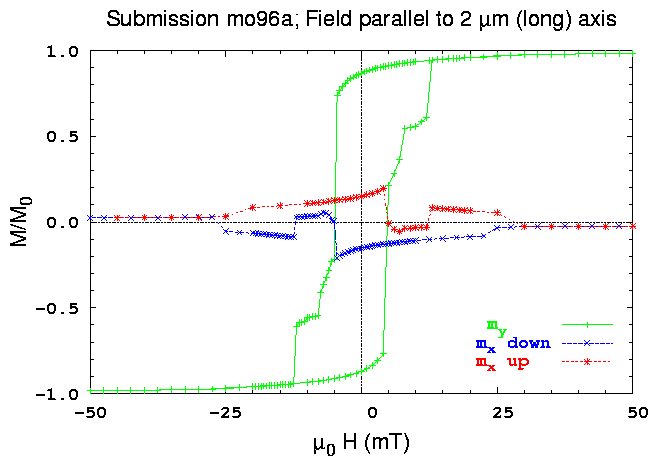}\includegraphics[width=2.8in]{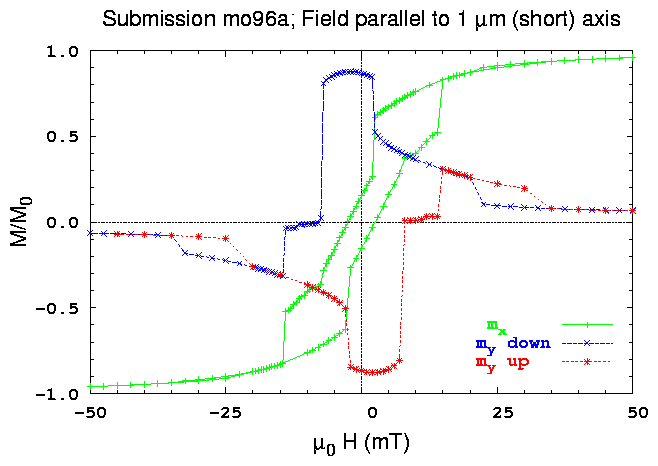}}
  \quad
  \subfloat[iLLG equation: $\alpha = 0.1$, $\tau = 1.0\times10^{-12}\mathrm{s}$ and $\Delta t = 1\mathrm{ps}$.]{\includegraphics[width=2.8in]{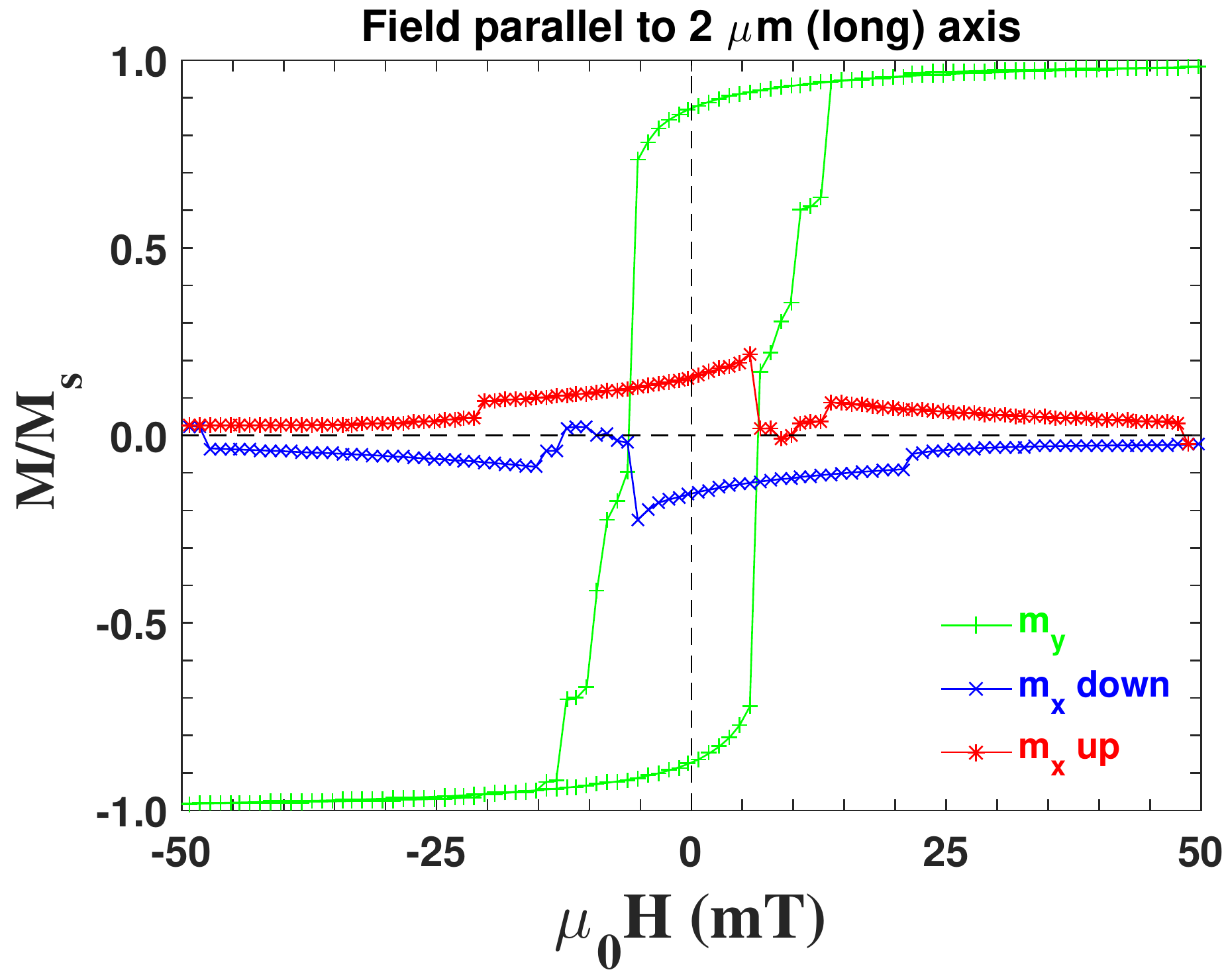}\includegraphics[width=2.8in]{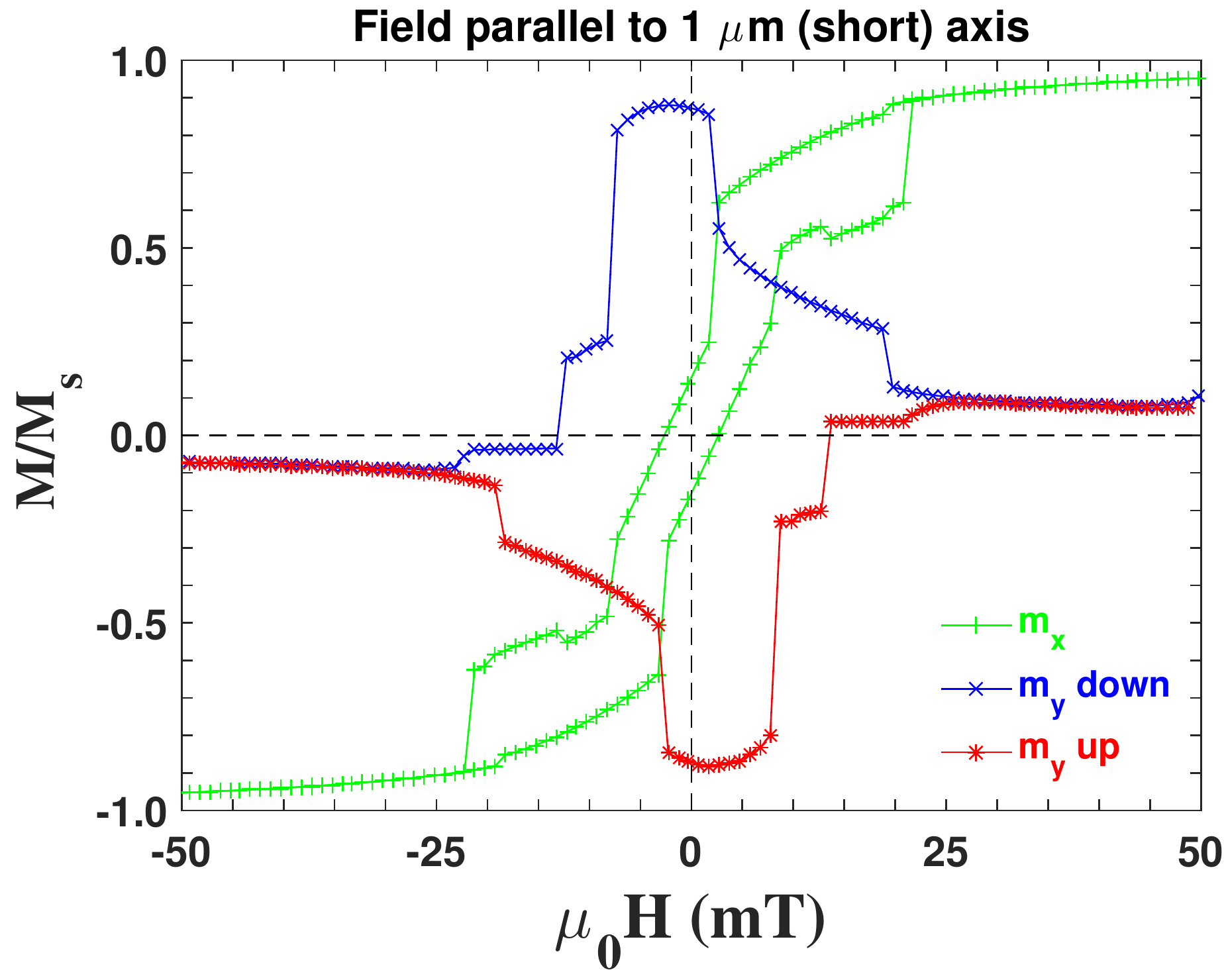}}
  \quad
  \subfloat[iLLG equation: $\alpha = 0.02$, $\tau = 1.0\times10^{-13}\mathrm{s}$ and $\Delta t = 0.1\mathrm{ps}$.]{\includegraphics[width=2.8in]{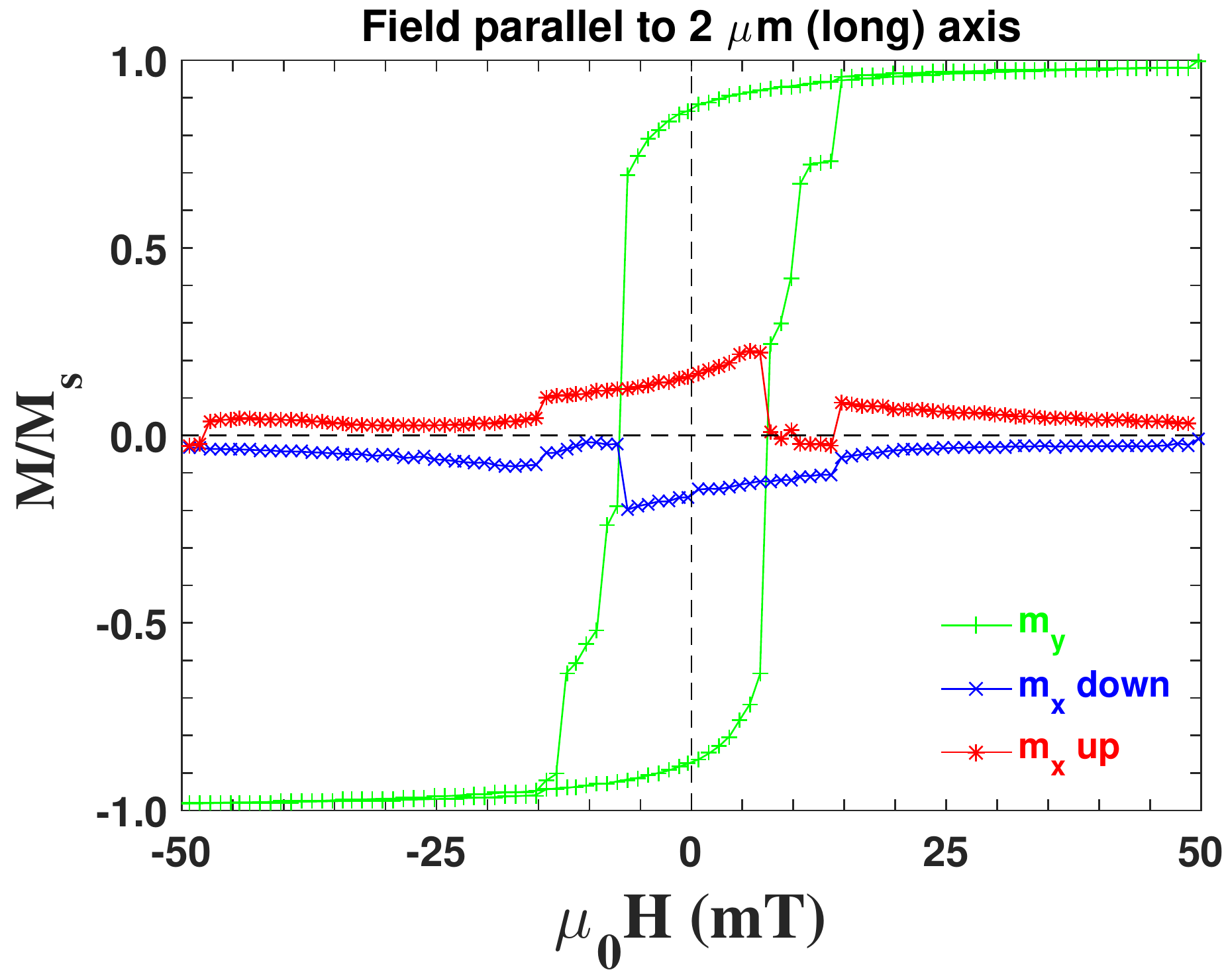}\includegraphics[width=2.8in]{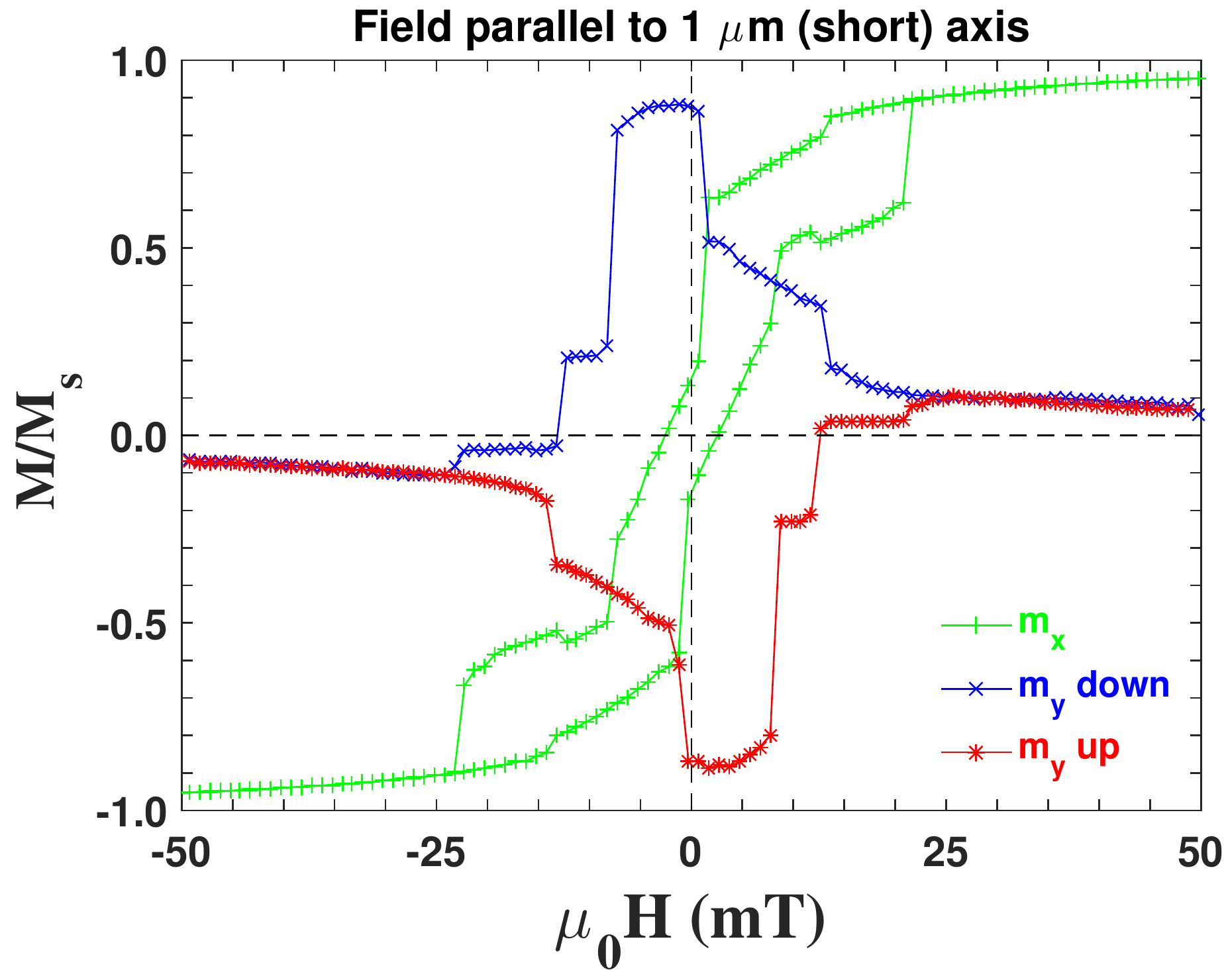}}
  \caption{Hysteresis loops of the LLG equation (mo96a) and the iLLG equation with two sets of damping and inertial parameters. The applied magnetic fields are approximately parallel (canting angle $+1^{\circ}$) to the long axis (left column) and the short axis (right column), and change from $-50\mathrm{mT}$ to $50\mathrm{mT}$ uniformly.}
  \label{fig:hysteresis-loops}
\end{figure}

In the previous simulations, we use a meshsize $20\mathrm{nm}\times20\mathrm{nm}\times20\mathrm{nm}$ which only has one grid point in the $z$ direction. For real 3D simulations, we consider a meshsize $20\mathrm{nm}\times20\mathrm{nm}\times5\mathrm{nm}$. In the absence of an applied magnetic field, it is relaxed to the stable flower state as \cref{subfig:Flower} with the relaxation dynamics shown in \cref{fig:convergence-flower}, and the LL energy $\mathcal{F}[\mathbf{M}]$ and the total energy $\mathcal{J}[\mathbf{M}]$ are plotted as functions of time in \cref{fig:comparsion_FJ} when $\alpha = 0.02$, $\Delta t = 10\mathrm{fs}$, $T = 2\mathrm{ns}$ and $\tau = 1.0\times10^{-11}\mathrm{s}$. At $2\mathrm{ns}$, $\mathcal{F}[\mathrm{M}]$ and $\mathcal{J}[\mathbf{M}]$ are $1.71\times10^{-16}\mathrm{J}$. In the presence of both the stray field and inertial term, the performance of LL energy and total energy are distinct from \cref{fig:energy_nostray}. It is found that the LL energy and the total energy obtained by the proposed method are decaying in the long time but not monotonically on the timescale of sub-picosecond.
\begin{figure}[htbp]
  \centering
  \includegraphics[width=6in]{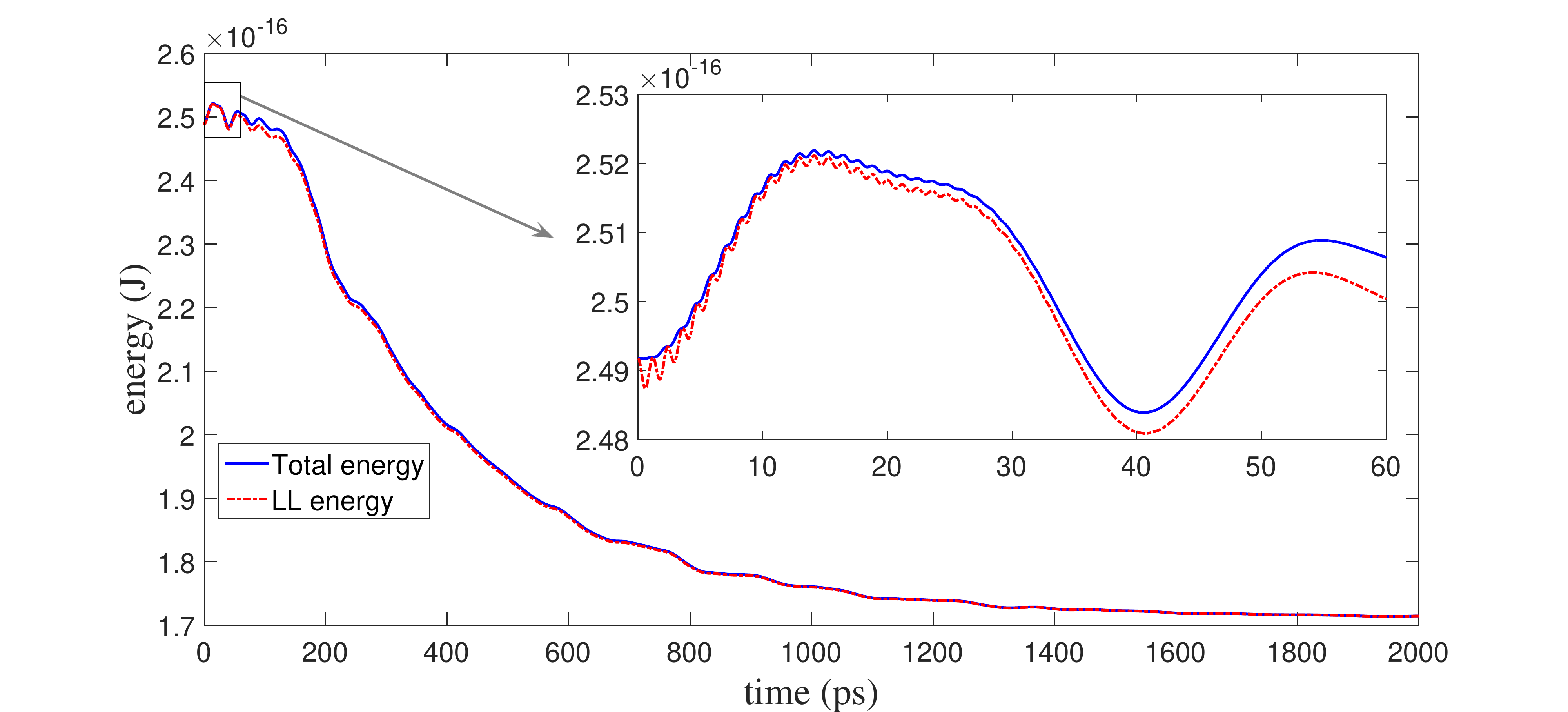}
  \caption{ LL energy $\mathcal{F}[\mathbf{M}]$ and total energy $\mathcal{J}[\mathbf{M}]$ as a function of time in the absence of a magnetic field when $\alpha = 0.02$, $\Delta t = 10\mathrm{fs}$, $T = 2\mathrm{ns}$ and $\tau = 1.0\times10^{-11}~\mathrm{s}$. At $2\mathrm{ns}$, both $\mathcal{F}[\mathrm{M}]$ and $\mathcal{J}[\mathbf{M}]$ are $1.71\times10^{-16}\mathrm{J}$.}
  \label{fig:comparsion_FJ}
\end{figure}

\begin{figure}[htbp]
  \centering
  \subfloat[$t = 0 \mathrm{ps}$]{\includegraphics[width=3in]{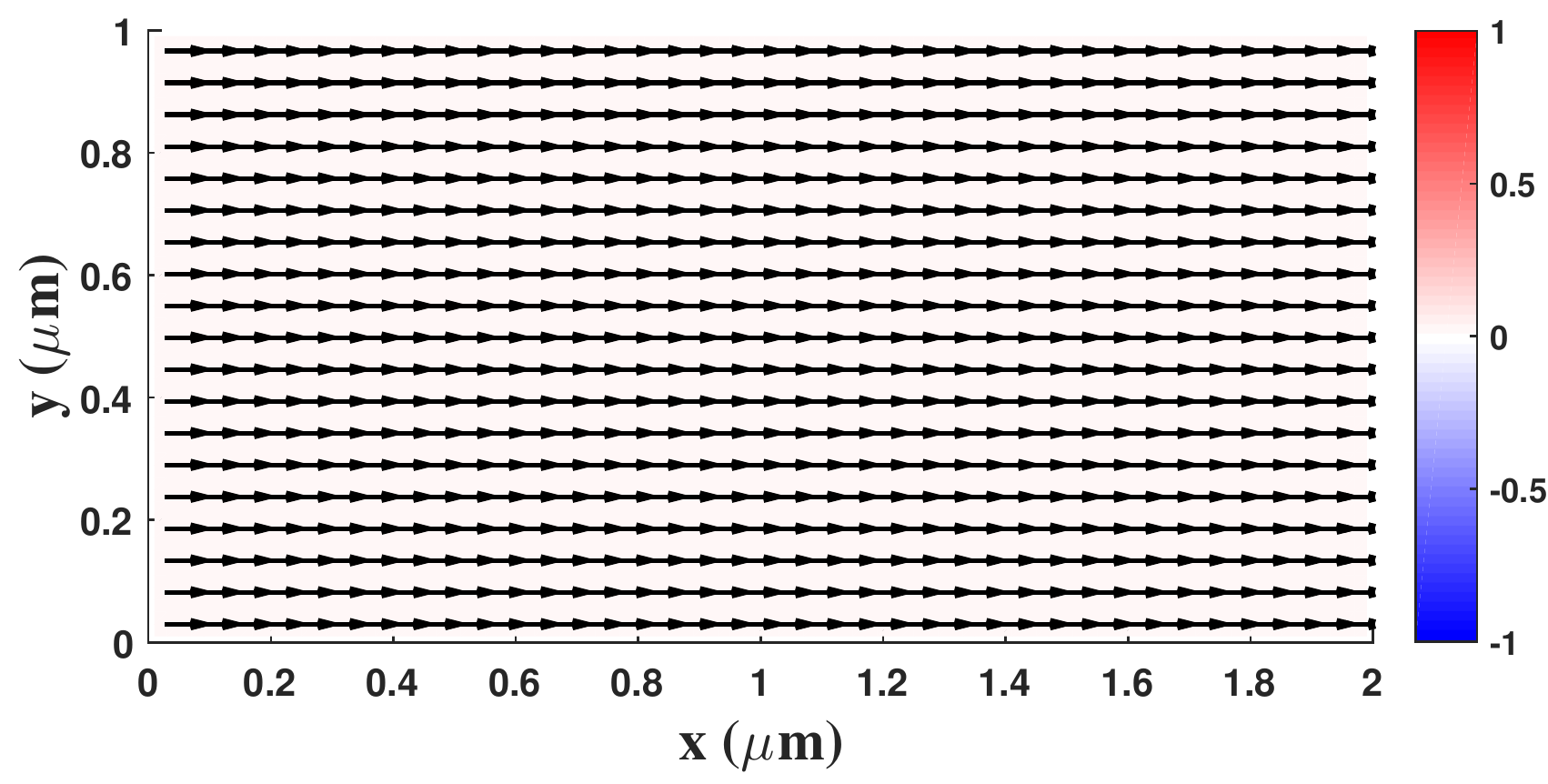}}
  \subfloat[$t = 10 \mathrm{ps}$]{\includegraphics[width=3in]{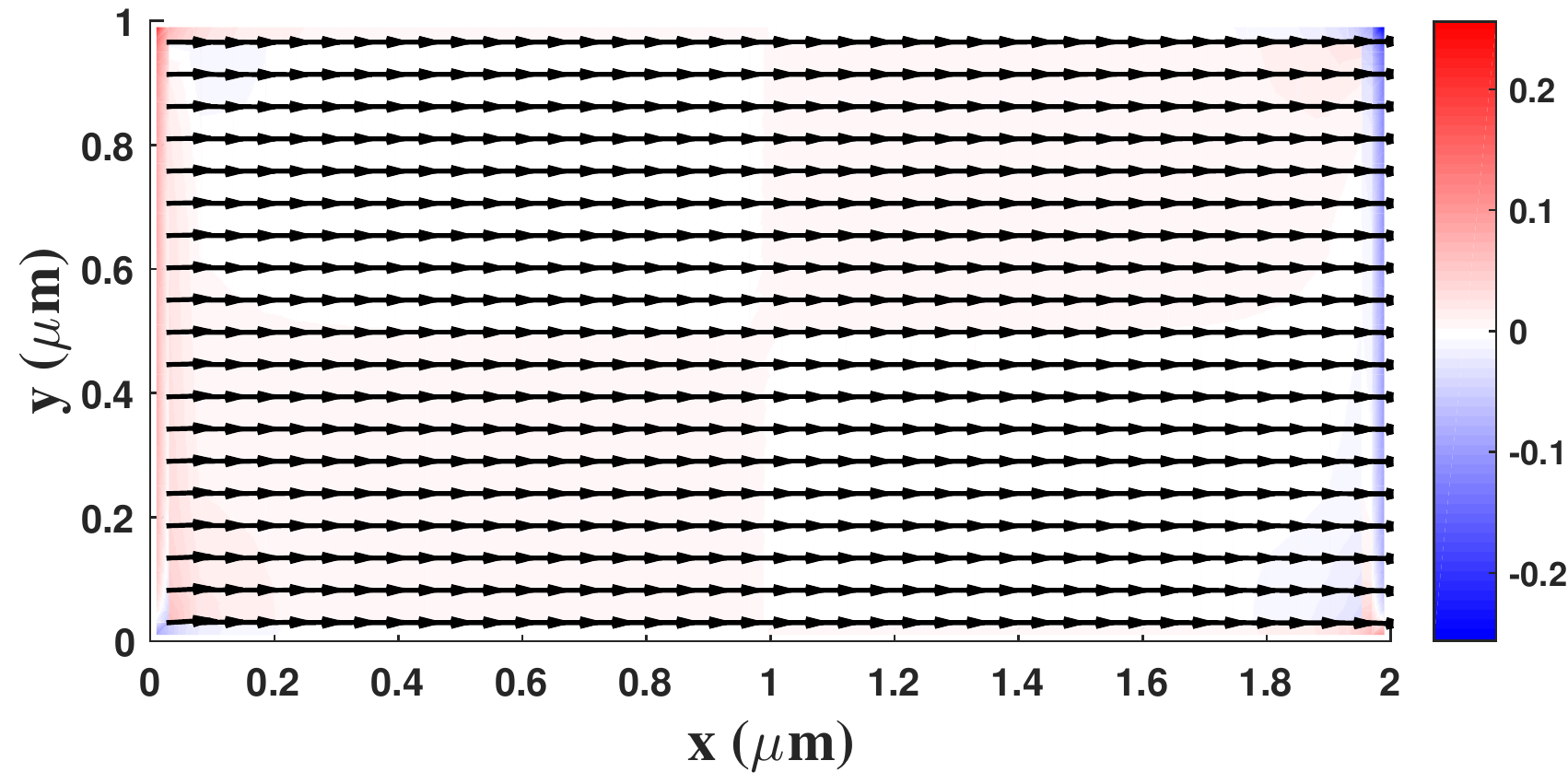}}
  \quad
  \subfloat[$t = 0.1 \mathrm{ns}$]{\includegraphics[width=3in]{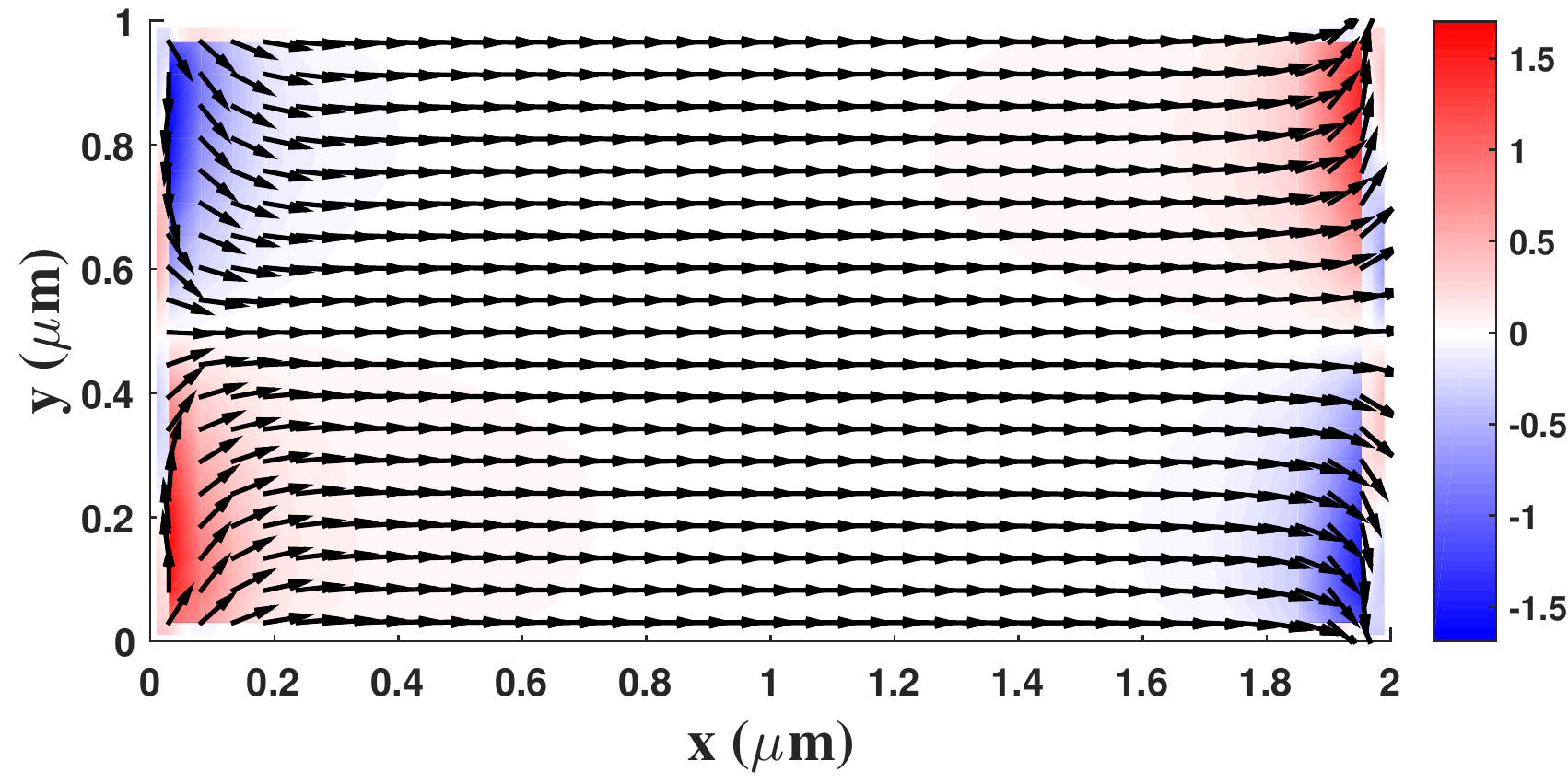}}
  \subfloat[$t = 1 \mathrm{ns}$]{\includegraphics[width=3in]{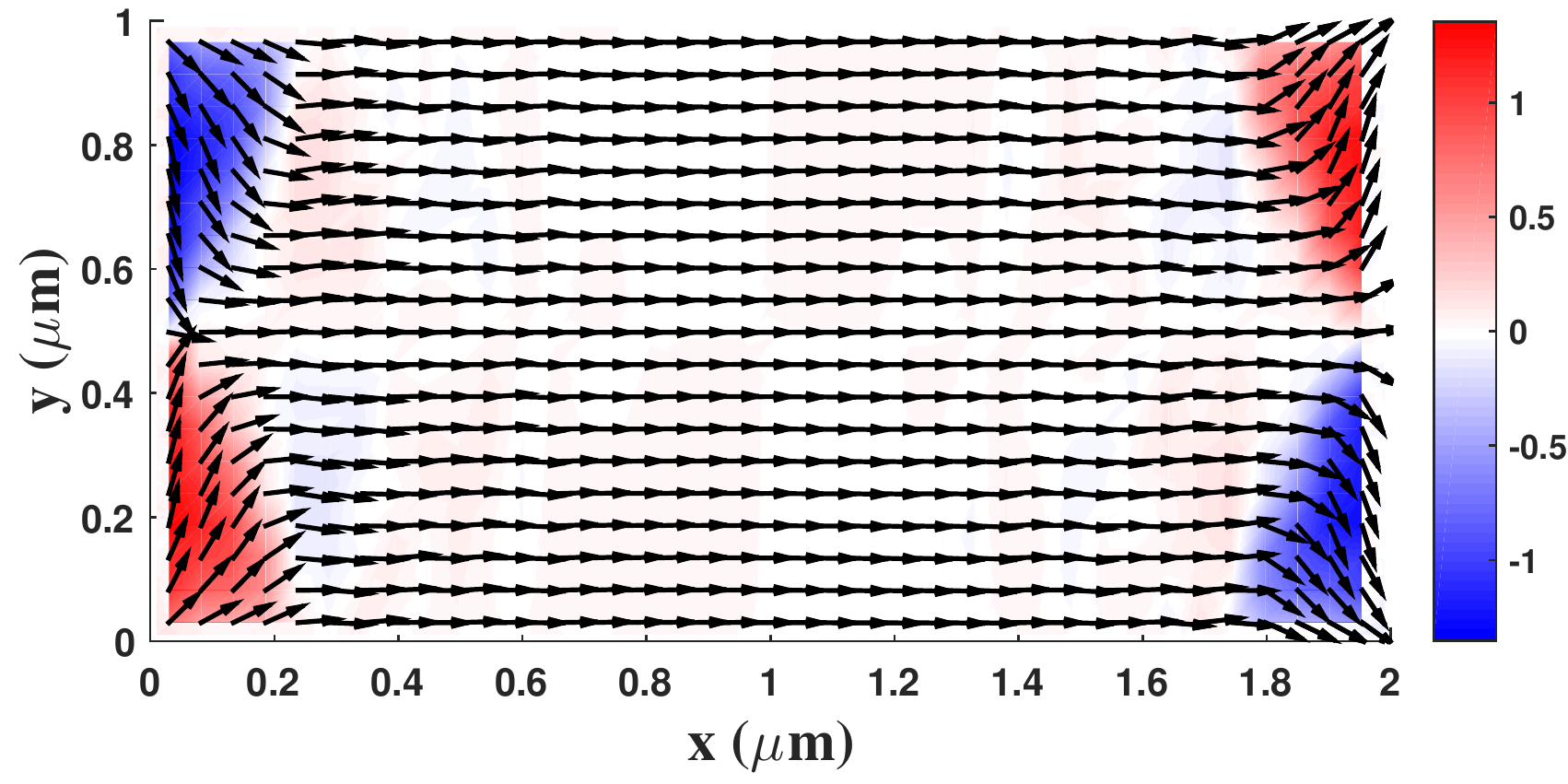}}
  \caption{Magnetization profiles in the $xy$-plane at different times, and the background color represents the angle between the in-plane magnetization and the $x$-axis. Starting from the configuration at $t=0 \mathrm{ps}$, the configuration of magnetization is relaxed to the flower state as in \cref{fig:equilibrium_inertial}.}
  \label{fig:convergence-flower}
\end{figure}

As demonstrated above, the inertial dynamics in the iLLG equation is activated by a high-frequency magnetic pulse. To make a quantitative connection between the inertial dynamics and material parameters, in each case, we only change one material parameter and fix all other parameters. All material parameters are the same as before unless specified.
\begin{itemize}
  \item Change the frequency of magnetic pulse: $f = 200\mathrm{GHz}$, $500\mathrm{GHz}$, and $1000\mathrm{GHz}$. Spatially averaged magnetization and total energy in terms of time for the iLLG equation in the presence of magnetic pulses are plotted in \cref{fig:diff_frequency}. Inertial dynamics are observed and oscillatory profiles exist in the temporal history of total energy. Compared to \cref{fig:energy_nostray}, results in \cref{fig:diff_frequency} show the existence of the inertial dynamics in the presence of stray field. However, with the increase of frequency, the inertial dynamics gradually becomes invisible. Meanwhile, it is interesting that the LL energy $\mathcal{F}[\mathbf{M}]$ has oscillatory profiles whose frequencies seem to be independent of the frequency of the magnetic pulse field, when the inertial effect is induced by the applied magnetic pulse field.
      \begin{figure}[htbp]
        \centering
        \subfloat[Averaged magnetization]{\includegraphics[width = 6in]{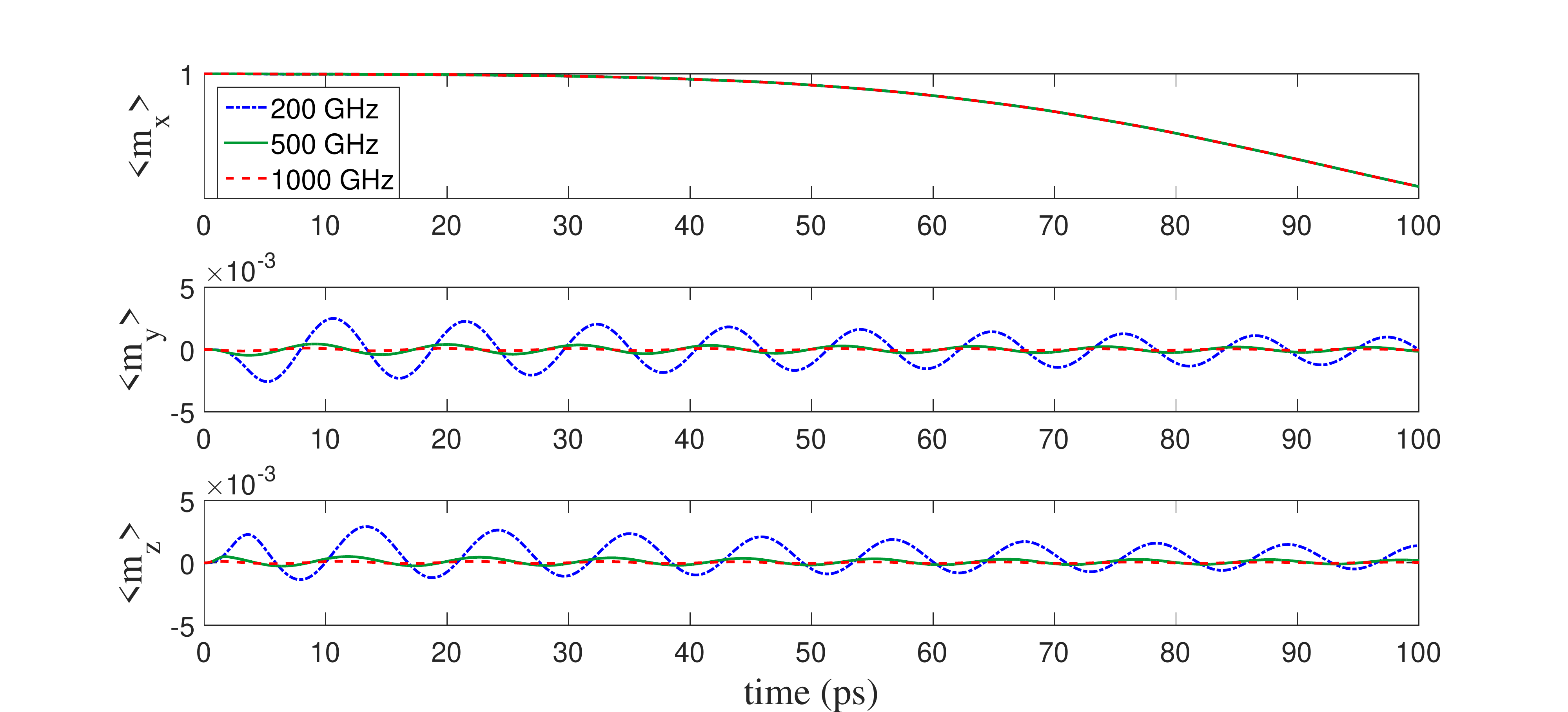}}
        \quad
        \subfloat[LL energy]{\includegraphics[width = 6in]{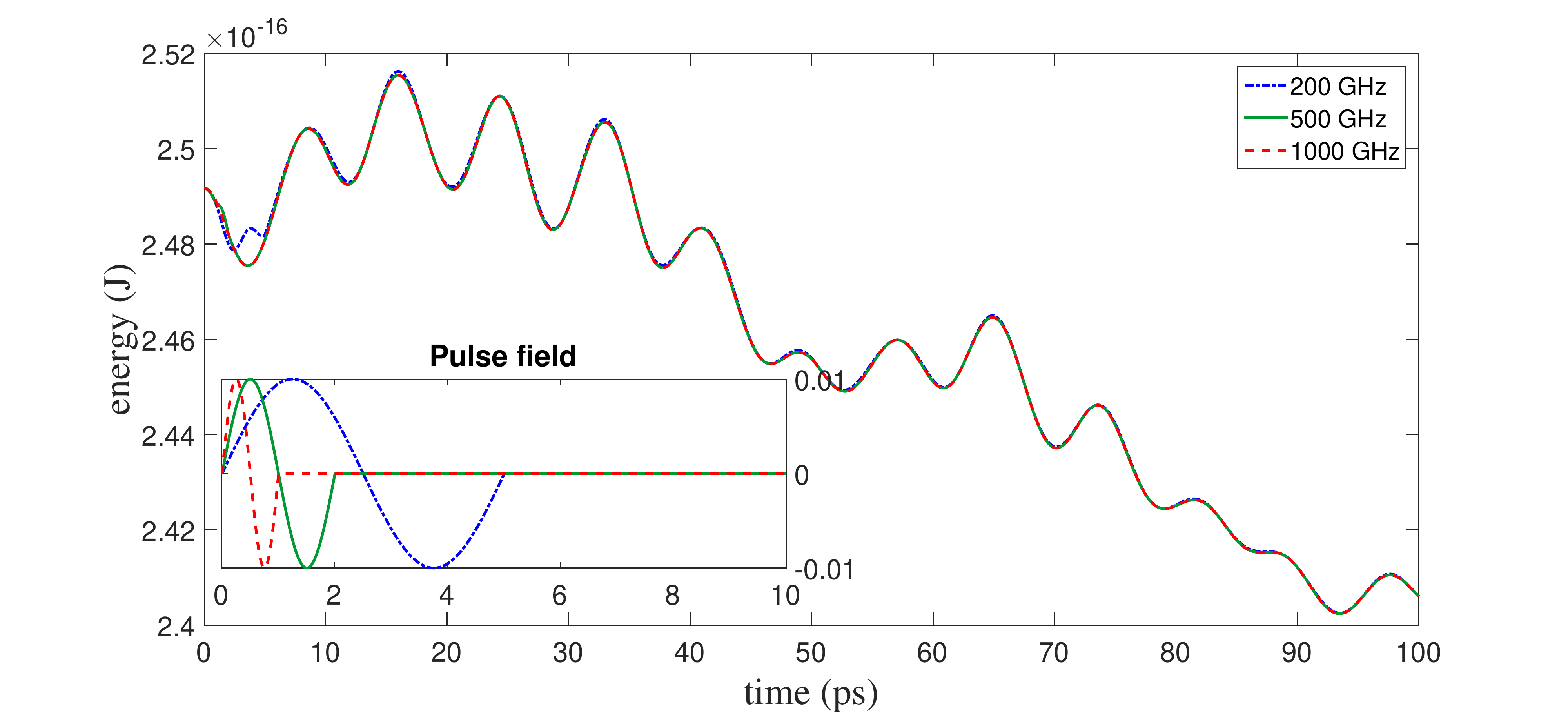}}
        \caption{Averaged magnetization and LL energy as functions of time for the iLLG equation in the presence of magnetic pulse fields with different frequencies ($200\mathrm{GHz}$, $500\mathrm{GHz}$ and $1000\mathrm{GHz}$). Inertial dynamics are observed and oscillatory profiles exist in the temporal history of total energy. Here $\tau = 1.0\times10^{-12}\mathrm{s}$, $\alpha = 0.02$, and $\Delta t = 10\mathrm{fs}$.}
        \label{fig:diff_frequency}
      \end{figure}
  \item Fix the characteristic timescale of the inertial term $\tau = 1.0\times10^{-10}\mathrm{s}$ and change the damping parameter $\alpha = 0.005$, $0.02$, and $0.1$. In the presence of a $500\mathrm{GHz}$ magnetic pulse over $2$ps, we plot averaged magnetization as a function of time for the iLLG equation in \cref{fig:vary damping}. For the parameters under study, the smaller the damping parameter is, the more oscillatory the inertial dynamics is.
      \begin{figure}[htbp]
        \centering
        \includegraphics[width = 6in]{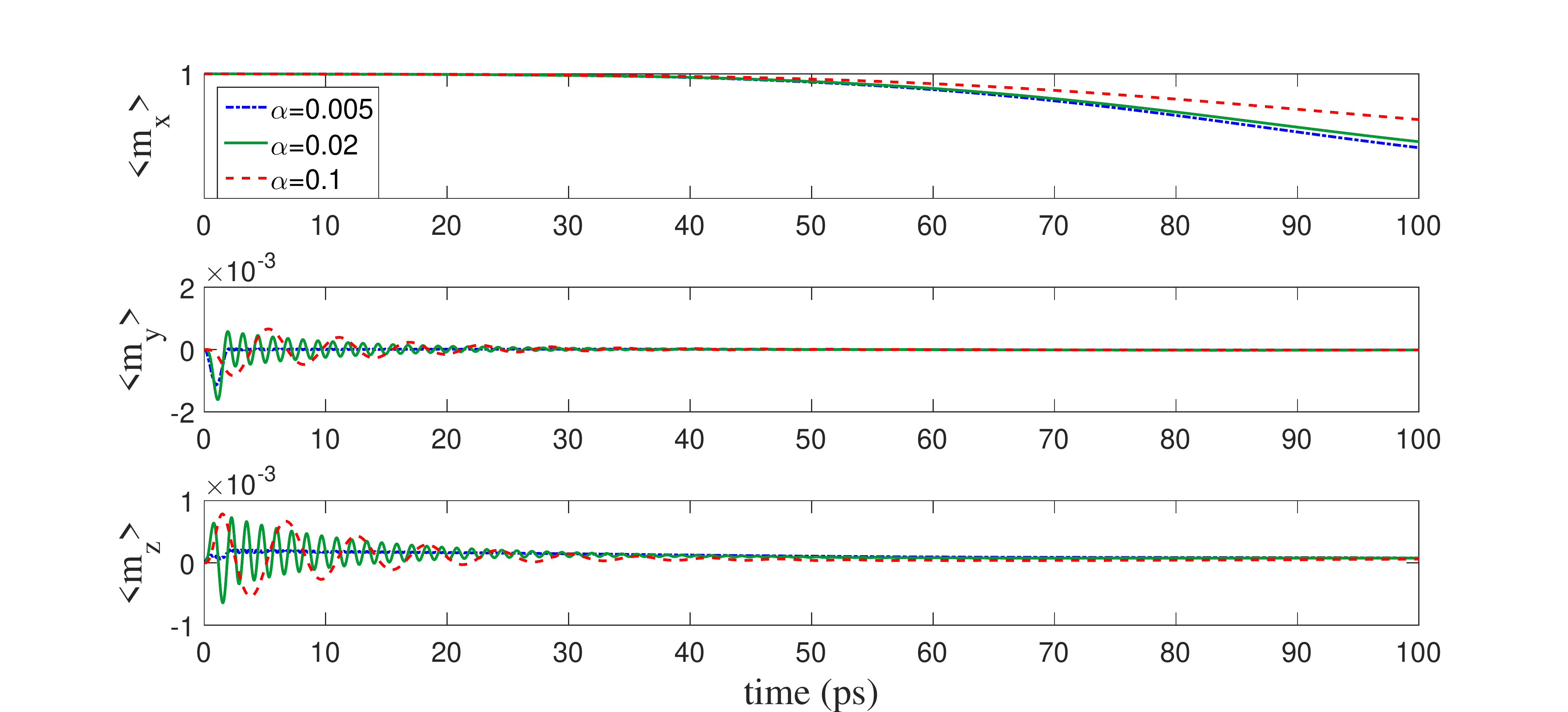}
        \caption{Averaged magnetization in terms of time for the iLLG equation under a $500\;$GHz magnetic pulse when $\alpha = 0.1$, $0.02$ and $0.005$ and $\tau = 1.0\times10^{-10}\mathrm{s}$.}\label{fig:vary damping}
      \end{figure}
  \item Fix the damping parameter $\alpha = 0.02$ and change the characteristic timescale of inertial dynamics $\tau = 1.0\times10^{-9}\mathrm{s}$, $1.0\times10^{-10}\mathrm{s}$ and $1.0\times10^{-11}\mathrm{s}$. We plot the averaged magnetization in terms of time for the iLLG equation under a $500\mathrm{GHz}$ magnetic pulse when $\Delta t = 10\mathrm{fs}$ in \cref{fig:different_tau}. It is found that the smaller the characteristic timescale of inertial term is, the more oscillatory the inertial dynamics is for the material parameters under the study.
      \begin{figure}[htbp]
      \centering
      \includegraphics[width=6in]{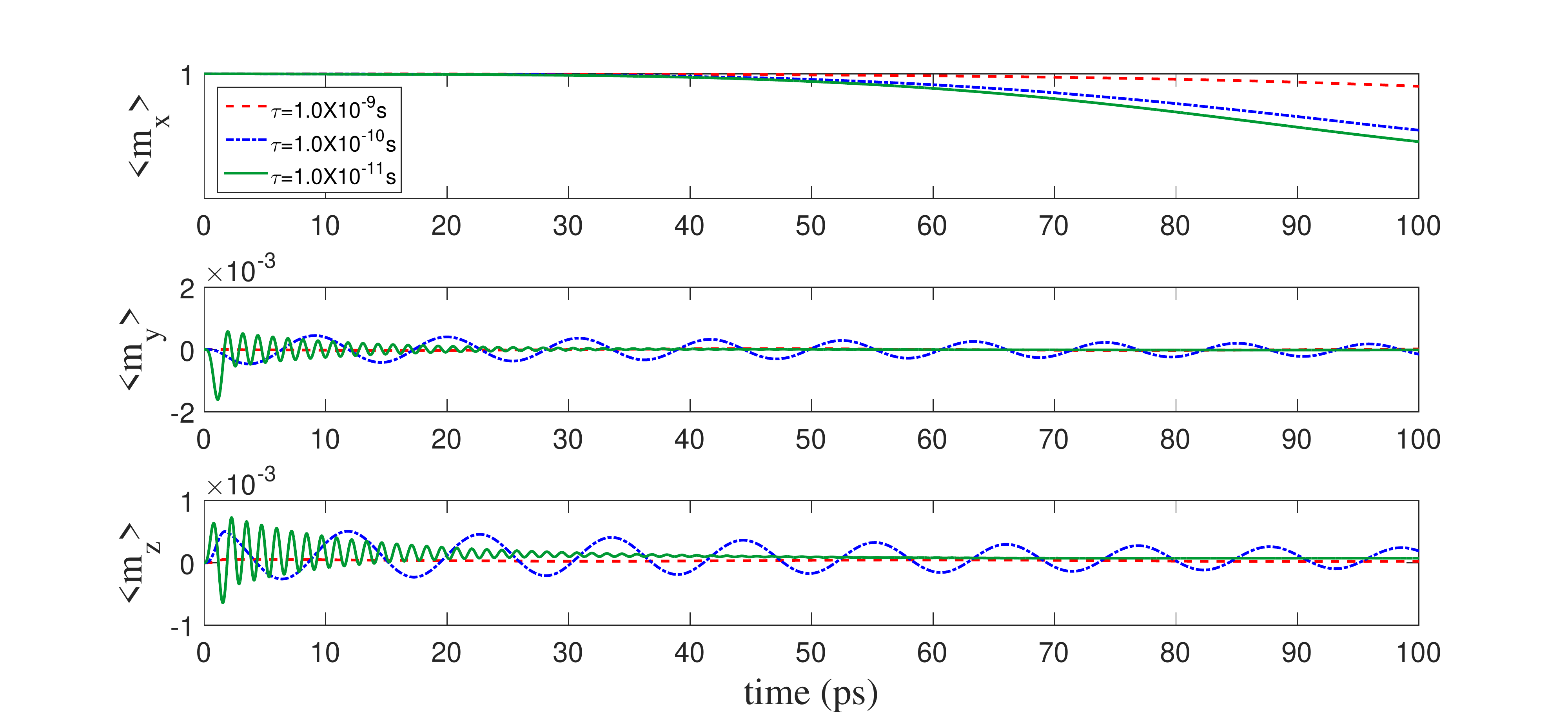}
      \caption{Averaged magnetization as a function of time for the iLLG equation under a $500\mathrm{GHz}$ magnetic pulse when $\tau = 1.0\times10^{-9}\mathrm{s}$, $1.0\times10^{-10}\mathrm{s}$ and $1.0\times10^{-11}\mathrm{s}$, $\alpha = 0.02$, and $\Delta t = 10\mathrm{fs}$.}
      \label{fig:different_tau}
      \end{figure}
\end{itemize}

Finally, we consider an example where the stray field is fully relaxed. An equilibrium state is generated by the iLLG equation with the initial state $\mathbf{m}^{0} = \mathbf{e}_1$, $\alpha = 0.1$, $\tau = 1.0\times10^{-12}\mathrm{s}$, and $\Delta t=1\mathrm{ps}$ for a simulation period $T=2\mathrm{ns}$.
Afterwards, we change $\alpha = 0.005$, $\tau = 5.0\times10^{-11}\mathrm{s}$, and $\Delta t=0.1\mathrm{ps}$, and apply a $500\mathrm{GHz}$ magnetic pulse.
Averaged magnetization, total energy, and LL energy as functions of time are plotted in \cref{fig:stray-relax-generate-pulse}. Magnetization oscillation concentrates around $620\mathrm{GHz}$ when a $500\mathrm{GHz}$ magnetic pulse is applied. The oscillation continues until 250$\mathrm{ps}$ in this simulation.
\begin{figure}[htbp]
  \centering
  \subfloat[Averaged magnetization]{\includegraphics[width = 6in]{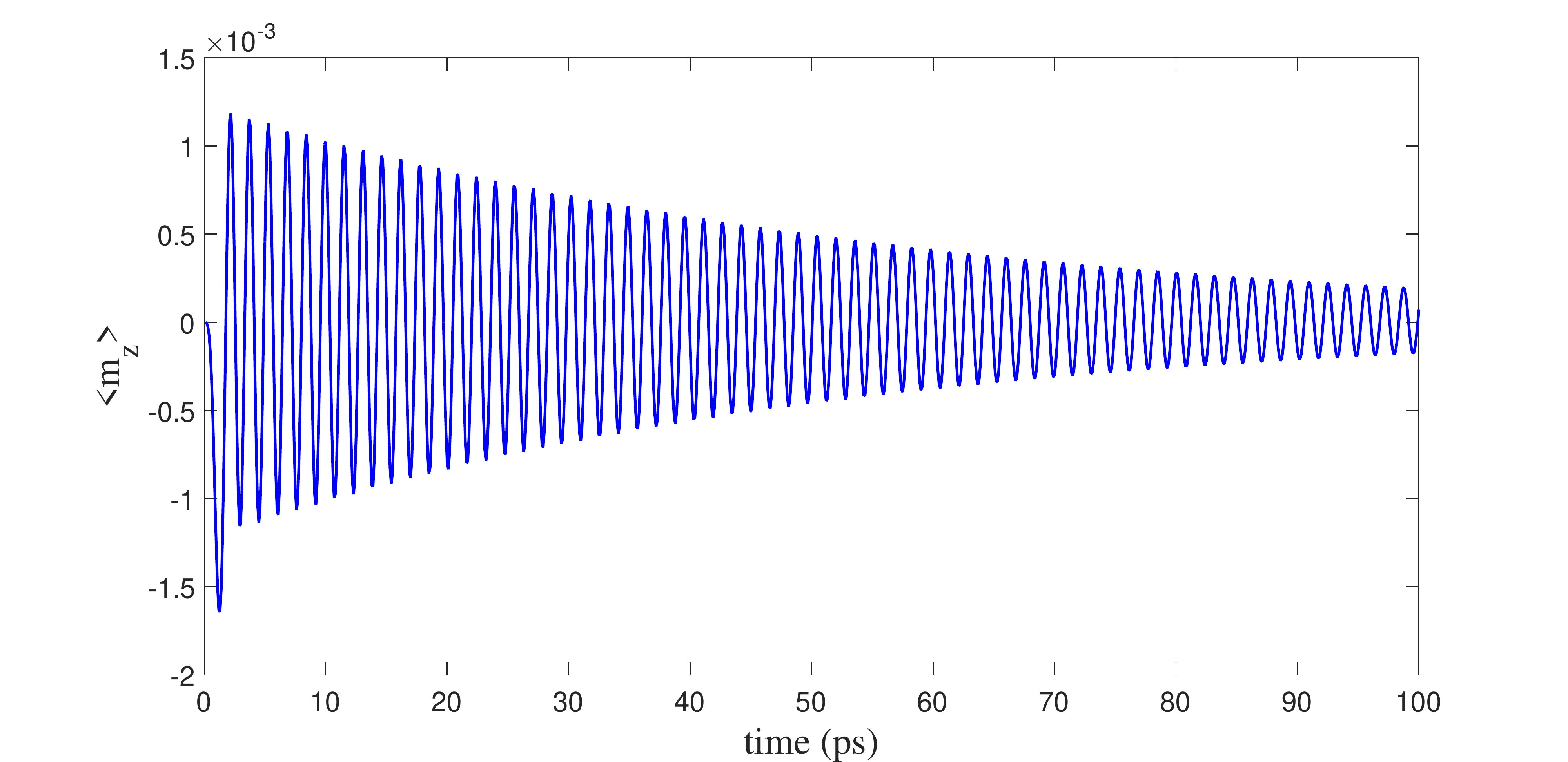}}
  \quad
  \subfloat[Total energy and LL energy]{\includegraphics[width = 6in]{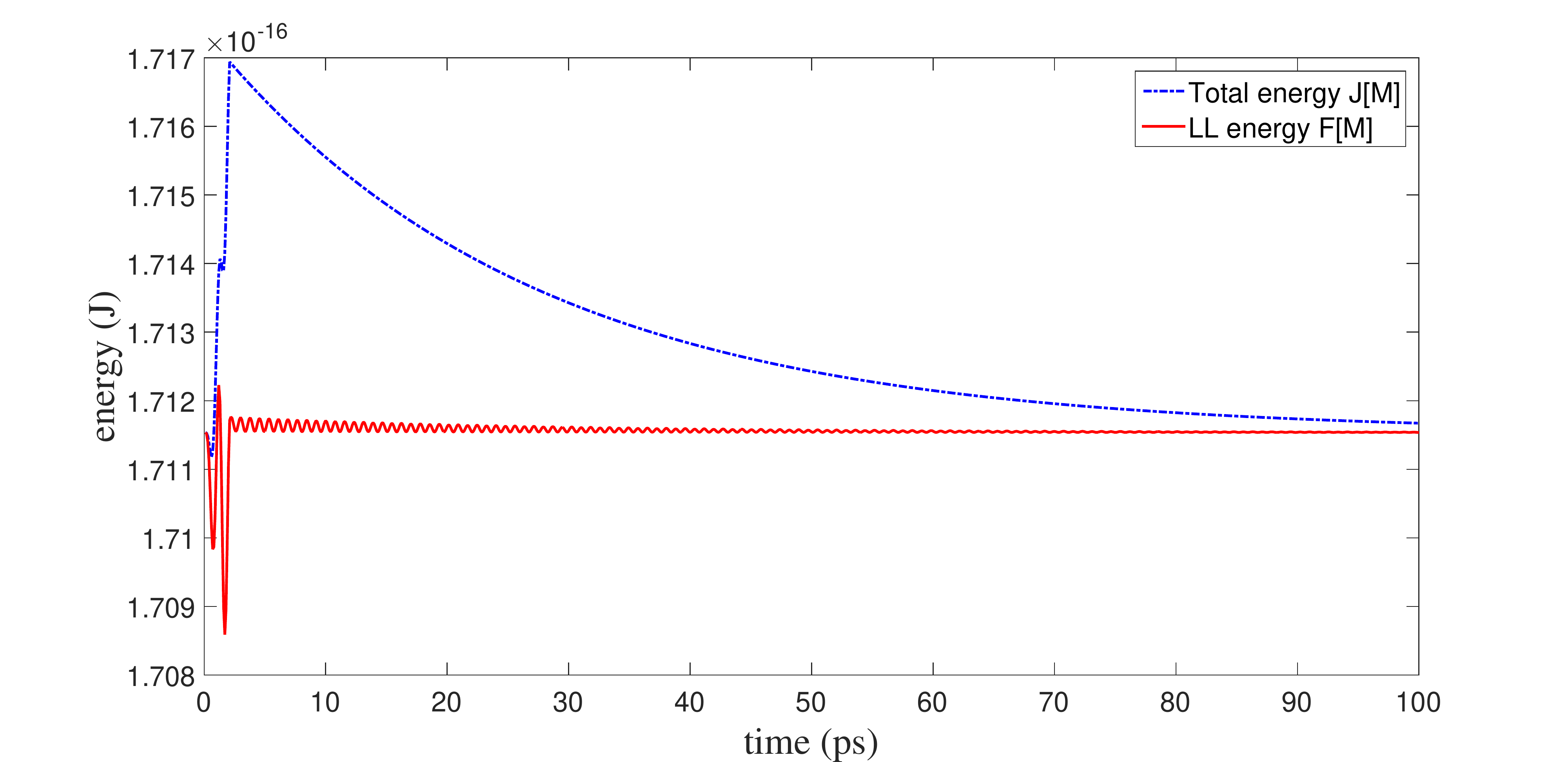}}
  \caption{Averaged magnetization, total energy, and LL energy as functions of time. The initial state is a stable flower state as in \cref{fig:equilibrium_inertial}, which is obtained from the uniform magnetization $\mathbf{e}_1$. Magnetization oscillation concentrates around $620\mathrm{GHz}$ when a $500\mathrm{GHz}$ magnetic pulse is applied. The total energy and LL energy are both $1.71\times10^{-16}\mathrm{J}$ at $200\mathrm{ps}$. The oscillation continues until $250\mathrm{ps}$ in this simulation.}
  \label{fig:stray-relax-generate-pulse}
\end{figure}

\section{Conclusion}
\label{sec:conclusion}
In this work, based on the midpoint scheme, we propose a second-order semi-implicit scheme for the inertial Landau-Lifshitz-Gilbert equation, to study the ultrafast inertial dynamics at the sub-picosecond timescale of ferromagnetic materials. The unique solvability is proven, and the dependence of the number of iterations in GMRES for solving the unsymmetric linear system of equations in the proposed method on the damping/inertial parameters is theoretically explored and further verified by numerical tests. Micromagnetics simulations show the inertial Landau-Lifshitz-Gilbert equation hosts inertial effects at the sub-picosecond timescale, but is consistent with the Landau-Lifshitz-Gilbert equation for larger timescales. Moreover, the relationships between the inertial dynamics and the frequency of the applied field, the damping parameter, and the inertial parameter are systematically investigated. These studies shall be helpful in designing magnetic devices with ultrafast magnetization dynamics of non-negligible inertial behavior.

\section*{Acknowledgments}
P. Li is grateful to Kelong Zheng for helpful discussions, and is also grateful for the discussions with Changjian Xie and Yifei Sun on coding and acknowledges the financial support from the Postgraduate Research \& Practice Innovation Program of Jiangsu Province via grant KYCX20\_2711. L. Yang is supported by the Science and Technology Development Fund, Macau SAR(File no.0070/2019/A2) and National Natural Science Foundation of China (NSFC) (Grant No. 11701598). J. Lan is supported by NSFC (Grant No. 11904260) and Natural Science Foundation of Tianjin (Grant No. 20JCQNJC02020). R. Du was supported by NSFC (Grant No. 11501399). J. Chen is supported by NSFC (Grant No. 11971021).

\bibliographystyle{model1-num-names}
\bibliography{refs}

\end{document}